\documentclass{article}
\usepackage{amssymb}
\usepackage{eurosym}
\usepackage{epsfig}
\usepackage{amsmath}
\usepackage{amsfonts}
\usepackage{pgf,tikz}
\usepackage{enumerate}
\usepackage{mathrsfs}
\usepackage{cite}
\usepackage{subfigure}
\usepackage[colorlinks=true, linkcolor=black, citecolor=black, urlcolor=black]{hyperref}
\usepackage[top=2cm, bottom=2.5cm, left=2.5cm, right=2.5cm, headsep=1cm]{geometry}

\newtheorem{theorem}{\sc Theorem}[section]
\newtheorem{proposition}{\sc Proposition}[section]

\newtheorem{definition}{\sc Definition}[section]
\newtheorem{remark}{\sc Remark}[section]
\newtheorem{corollary}{\sc Corollary}[section]
\newtheorem{example}{\sc Example}[section]

\def\qed{\hbox to 0pt{}\hfill$\rlap{$\sqcap$}\sqcup$\medbreak}

\title{A unified approach to compression--expansion fixed point theorems for operators systems and applications}

\author{Laura Mª Fern\'andez--Pardo$\,^2$ and Jorge Rodr\'iguez--L\'opez$\,^{1,2}$} 

\date{}
\begin{document}
 \maketitle

\begin{center}  {\small $^1$ CITMAga, 15782, Santiago de Compostela, Spain.  \\
		 $^2$ Departamento de Estat\'{\i}stica, An\'alise Matem\'atica e Optimizaci\'on, \\ Universidade de Santiago de Compostela, \\ 15782, Facultade de Matem\'aticas, Campus Vida, Santiago, Spain.\\  Email: laura.fernandez.pardo@usc.es; jorgerodriguez.lopez@usc.es}
\end{center}

\medbreak

\noindent {\it Abstract.} In this paper, we present some fixed point theorems for operator systems in the line of Krasnosel'ski\u{\i}'s theorem in cones. The cone-compression and cone-expansion type conditions are imposed in a component-wise manner. Unlike related results in the literature, the operators are allowed to be defined in the Cartesian product of conical regions delimited by nonconvex sets. Our approach, based on the fixed point index, ensures the existence of a coexistence fixed point—that is, one with nontrivial components. As a first application, we establish several localization results for systems of integral equations between strictly star-shaped sets defined by functionals. These results cannot be derived solely from previous studies dealing with operators in annular regions. A second application concerns nonlinear systems involving the $\Phi$-Laplacian.

\medbreak

\noindent     \textit{2020 MSC:} 47H10, 47H11, 45G15, 34B18.

\medbreak

\noindent     \textit{Key words and phrases.} Coexistence fixed point; fixed point index; star-shaped sets; positive solution; nonlinear systems. %Coexistence fixed point; fixed point index; positive solution; Hammerstein systems; $p$-Laplacian system; radial solution.

\section{Introduction}

One of the principal tools in Nonlinear Analysis for proving the existence of nontrivial solutions to boundary value problems is Krasnosel’ski\u{\i}’s compression–expansion fixed point theorem \cite{guolak,Kras}. When dealing with systems of operator equations of the form
\begin{equation*}
	\left\{\begin{array}{l} x_1=T_1(x_1,x_2), \\ x_2=T_2(x_1,x_2), \end{array} \right.
\end{equation*}
the fixed points obtained through Krasnosel’ski\u{\i}’s theorem are not localized independently for each component, leaving open the possibility that some of them may be trivial. To address this issue, several \textit{vector versions of Krasnosel’ski\u{\i}’s fixed point theorem} have been formulated \cite{LFP_JRL,PrecupMAA,PrecupFPT,JRL}, imposing compression–expansion conditions on each component of the compact operator $T=(T_1,T_2)$ and guaranteeing a fixed point with all components nontrivial, i.e., a \textit{coexistence} fixed point. In \cite{PrecupFPT} the conditions are stated in the classical way, in terms of the partial order induced by the respective cones $K_1 \times K_2\subset X_1\times X_2$ in which the operator acts. Meanwhile, in \cite{JRL,PrecupSDC}, the so-called homotopic conditions of Krasnosel’ski\u{\i}’s theorem are imposed, whereas in \cite{LFP_JRL} the conditions are expressed in terms of the respective norms of the normed spaces forming the product $X_1 \times X_2$.

In the classical setting, one can impose Krasnosel’ski\u{\i}’s compression–expansion conditions on the operator over the boundary of two bounded, relatively open sets $U$ and $\mathcal{O}$ of the cone, with $0 \in U$ and $\overline{U} \subset \mathcal{O}$, when the operator acts on a Banach space $X$. In \cite{guolak}, this is established via the fixed point index, which is well-defined only when the operator has no fixed points on the boundary of its domain. The reasoning relies on the Dugundji extension theorem for compact operators, allowing the extension of the compact operator from $\overline{\mathcal{O}}\setminus U$ to $\overline{\mathcal{O}}$, while preserving the index defined on $U$, $\mathcal{O}$, and $\mathcal{O}\setminus\overline{U}$.

In product spaces, assuming that each $X_j$ is a Banach space, $U_j$ and $\mathcal{O}_j$ are bounded relatively open subsets of $K_j$ with $0 \in U_j$ and $\overline{U}_j \subset \mathcal{O}_j$ ($j=1,2$),  and applying the Dugundji theorem does not suffice to extend an operator from $(\overline{\mathcal{O}}_1 \setminus U_1)\times(\overline{\mathcal{O}}_2 \setminus U_2)$ to $\overline{\mathcal{O}}_1\times\overline{\mathcal{O}}_2$ while keeping the fixed point index well-defined on the relevant subsets for the final computation. This requirement forces the results in \cite{JRL,LFP_JRL} to be essentially restricted to operators defined on sets of the form
\[\overline{K}_{r,R}:=\{x=(x_1,x_2)\in K_1\times K_2:r_j\leq\|x_j\|_{X_j}\leq R_j \text{ for }j=1,2\},\]
with $0<r_j<R_j\,(j=1,2)$. That is, on the product of annular shells of the cones. 

In this paper, we overcome this difficulty by stating a new and more general result, which applies to operators defined on a much more general domain. Namely, in those of the form 
\[(\overline{\mathcal{O}}_1 \setminus \Omega_1)\times(\overline{\mathcal{O}}_2 \setminus \Omega_2),\]
with $\mathcal{O}_j$ a bounded relatively open set of $K_j$ and $\Omega_j$ a \textit{strictly star-shaped} set such that $\overline{\Omega}_j\subset\mathcal{O}_j$ $(j=1,2)$.
Notice that such type of domain is general enough to cover most of applications involving systems of differential or integral equations.

The class of strictly star-shaped sets had been already considered successfully in fixed point theory. For instance, a Rothe type fixed point theorem in strictly star-shaped domains was proposed by Deimling in \cite[Ex. 5, p. 33]{Deim} (see also the proof due to Zanolin \cite[Corollary 1]{z}). This class of sets was also mentioned by Kwong \cite{kwongNA} in relation with Brouwer fixed point theorem. In particular, it is stated that strictly star-shaped sets are homeomorphic to the closed unit ball. A precise proof of this fact, in the finite dimensional setting, can be found in \cite[Lemma 3.2]{fz}. We also notice that \textit{strictly star shaped} sets are precisely the class of star convex sets considered in \cite{LR,LPR} in the context of Krasnosel'ski\u{\i} compression--expansion fixed point theorem.

Strictly star-shaped sets are also widely used in the study of solutions to integral and differential problems. For instance, in \cite{fz}, Feltrin and Zanolin employed strictly star-shaped sets in the study of periodic solutions of first order systems. Existence results involving this type of sets, defined by concave and convex functionals, are also established in \cite{infante, eloe}.

The fixed point theory developed here has a two-fold interest when compared with the related literature:
\begin{enumerate}
	\item[$i)$] it provides a unified approach to the cone-compression and cone-expansion conditions in case of systems, covering the usual homotopy type and normed type conditions, see Remark~\ref{remark_versions} below.
	\item[$ii)$] it applies to operators defined in general domains of the form $(\overline{\mathcal{O}}_1 \setminus \Omega_1)\times(\overline{\mathcal{O}}_2 \setminus \Omega_2)$, as explained before.   
\end{enumerate}

Relying on our main result, we establish several localization results for a Hammerstein type system of integral equations. Following the ideas due to Webb \cite{web}, we impose on the nonlinearities weaker growth conditions than those commonly used in similar existence results (see, for instance, \cite{Figue_tojo,GKM,infante, lan2001} for results in the classical case and \cite{Lan1,JRL} for the vectorial approach), so that a nonlinearity may take larger values on part of its domain as long as it remains sufficiently small on another.
A second application concerns systems of $\Phi$-Laplacian equations with mixed boundary conditions. Our results encompass singular, classical, and component-wise combinations of singular and classical $\Phi$-Laplacians. The resulting localization—derived using both the $L^1$-norm and the supremum norm—ensures the existence of a positive coexistence solution. Complementary results for differential problems involving the $\Phi$-Laplacian, obtained through topological methods, can be found, for instance, in \cite{bereanu,herlea,rachu}.

The paper is organized as follows. In Section 2, we introduce strictly star-shaped sets and construct a retraction essential for the proofs of the main results. In Section 3, we recall the basic properties of the Leray-Schauder fixed point index and establish new fixed point theorems in Cartesian products of cones. In Section 4, we introduce conditions on functionals defining strictly star-shaped sets, illustrating the flexibility of the abstract theory for the localization of nontrivial solutions. Finally, Section 5 is devoted to the localization of coexistence positive solutions for both Hammerstein-type systems of integral equations and $\Phi$-Laplacian systems.

\section{Star-shaped sets}

In the sequel, let $(X,\left\|\cdot\right\|)$ be a normed space.

\begin{definition}
	A subset $A\subset X$ is said to be a $p$-\textit{star-shaped} or $p$-\textit{star convex set} if 
	\[(1-\lambda)\,p+\lambda\,x\in A \quad \text{for all } \lambda\in[0,1] \text{ and all } x\in A, \]
	that is, the set $A$ contains the whole segment connecting $p$ and every $x\in A$.  
	
	If $p=0$, $A$ is simply said to be a \textit{star-shaped} or a \textit{star convex set}.
\end{definition}

We now restrict our attention to a particular class of these sets on which we will focus from now on, the strictly star-shaped sets. In particular, we will follow the terminology employed in \cite{fz} in the finite dimensional setting.

\begin{definition}
 Let $\Omega\subset E$ be a nonempty relatively open bounded subset of $E$, where $E\subset X$. We will say that $\overline{\Omega}$ is \textit{strictly star-shaped} with respect to a point $p$ over $E$ if
 \[(1-\lambda)\,p+\lambda\,x\in \Omega \quad \text{for all } \lambda\in [0,1) \text{ and all } x\in \partial_E\,\Omega, \]
 where $\partial_E\,\Omega$	denote the relative boundary of $\Omega$ in $E$.
 
 If $p=0$, we will say that $\overline{\Omega}$ is a \textit{strictly star-shaped set over $E$}. Moreover, if it is over the whole space, we will simply say that it is a strictly star-shaped set.
\end{definition}

Observe that, in particular, the previous definition implies that the point $p$ belongs to the set $\Omega$. Moreover, strictly star-shaped sets are those star-shaped sets whose boundary contains no segments aligned with $p$, as noted in \cite{Jmelado_Hmoral}, where an alternative and equivalent definition is given, alongside the proof that if $\overline{\Omega}$ is convex, then it is strictly star-shaped with respect to any point $p\in\Omega$.

Note that \textit{strictly star-shaped} sets are precisely the class of star convex sets considered in \cite{LPR}. Indeed, strictly star-shaped sets with respect to $p$ are those sets lying between star convex sets that satisfy condition (2.1) in \cite{LPR}, that is, 
\begin{equation}\label{cond_LPR}
	\text{for all } x\in \overline{\Omega}\setminus\{p\}, \text{ there is a unique } \lambda_x>0 \text{ such that } (1-\lambda_x)\,p+\lambda_x\,x\in \partial\,\Omega.
\end{equation} 

\begin{proposition}
 Let $\Omega$ be a nonempty open bounded subset of $X$. The set $\overline{\Omega}$ is a strictly star-shaped set with respect to a point $p$ if, and only if, it is a $p$-star convex set and satisfies condition \eqref{cond_LPR}.
\end{proposition}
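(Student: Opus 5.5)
We prove the two implications separately, working throughout with the rays $\{(1-\lambda)p+\lambda x:\lambda\ge 0\}$ issuing from $p$. Since $\Omega$ is bounded, every such ray with $x\neq p$ eventually leaves $\overline{\Omega}$; this gives us boundary points on each ray.

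\emph{Direct implication.} Assume $\overline{\Omega}$ is strictly star-shaped with respect to $p$. First we note that $p\in\Omega$: take any $x\in\partial\Omega$, which is nonempty because $\Omega$ is nonempty, bounded and open in $X$, and use $\lambda=0$.

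\emph{Star convexity.} Let $x\in\overline{\Omega}$ with $x\neq p$, and let $\mu^*=\sup\{\mu\ge 0: (1-\mu)p+\mu x\in\overline{\Omega}\}$. This supremum is finite by boundedness and satisfies $\mu^*\ge 1$. The point $y=(1-\mu^*)p+\mu^* x$ lies in $\overline{\Omega}$ by closedness. It is not in $\Omega$, since otherwise openness would allow us to go slightly further along the ray. Hence $y\in\partial\Omega$. By hypothesis, every point $(1-t)p+ty$ with $t\in[0,1)$ lies in $\Omega$. The points of the segment $[p,x]$ are exactly the points $(1-t)p+ty$ with $t\in[0,1/\mu^*]\subset[0,1]$, so all of them lie in $\overline{\Omega}$. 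This gives $p$-star convexity of $\overline{\Omega}$.

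\emph{Condition \eqref{cond_LPR}.} Existence of $\lambda_x$ is given by $\lambda_x=\mu^*$ above. For uniqueness, suppose $\lambda_1<\lambda_2$ both give boundary points $y_1,y_2$. Then $y_1=(1-t)p+ty_2$ with $t=\lambda_1/\lambda_2\in(0,1)$. By strict star-shapedness applied to $y_2$, this forces $y_1\in\Omega$. Since $\Omega$ is open, $\Omega\cap\partial\Omega=\emptyset$, which is a contradiction.

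\emph{Converse implication.} Assume $\overline{\Omega}$ is $p$-star convex and satisfies \eqref{cond_LPR}. We must show that $(1-\lambda)p+\lambda x\in\Omega$ for every $x\in\partial\Omega$ and every $\lambda\in[0,1)$.

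\emph{Preliminary facts.} Fix $x\in\partial\Omega$. We first need $p\neq x$, i.e.\ $p\in\Omega$, and this is the delicate point. If $p\in\partial\Omega$, then for any $z\in\Omega$ the ray through $z$ contains the boundary point $p$ (at parameter $0$) and also a boundary point at the exit parameter $\mu^*>0$ computed as above. This does not immediately contradict uniqueness, because \eqref{cond_LPR} only concerns $\lambda>0$. We therefore read \eqref{cond_LPR} as implicitly requiring $p\in\Omega$, since it asks for a unique boundary point on each ray from $p$, as in \cite{LPR}. Alternatively, if $p$ could be a boundary point, we would handle it by noting that $p\in\overline{\Omega}$ by star convexity, applied to any point of $\Omega$. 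Since $x\in\partial\Omega$ itself yields a boundary point at parameter $1$, uniqueness gives $\lambda_x=1$.

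\emph{Main step.} Points $z=(1-\lambda)p+\lambda x$ with $\lambda\in(0,1)$ lie in $\overline{\Omega}$ by star convexity. Suppose such a $z$ were in $\partial\Omega$. Then the ray through $z$, which is the same ray as through $x$, would have boundary points at two distinct parameters, namely $1$ and $1/\lambda$ relative to $z$. This contradicts the uniqueness of $\lambda_z$. Hence $z\in\overline{\Omega}\setminus\partial\Omega=\Omega$.

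\emph{Case $\lambda=0$.} This requires $p\in\Omega$, which is exactly the obstacle identified above. I expect to settle it via the convention $p\in\Omega$, which is implicit in the paper's remark after the definition and in \cite{LPR}.
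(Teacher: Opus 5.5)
Your argument follows the paper's proof. You use the exit parameter along the ray for existence in \eqref{cond_LPR}, and the ratio $\lambda_1/\lambda_2$ together with strict star-shapedness for uniqueness. For the converse you use uniqueness along the ray to place the points of the open segment between $p$ and $x$ in $\Omega$. Your forward direction is complete and slightly more careful than the paper's: you derive the $p$-star convexity of $\overline{\Omega}$ from the exit point $y\in\partial\Omega$, whereas the paper simply asserts it (``We already know\ldots'').

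The point you flagged at $\lambda=0$ is not a matter of convention, and it cannot be settled from the stated hypotheses. The ``if'' direction is false without $p\in\Omega$. The paper's proof has exactly the same hole: it concludes membership in $\Omega$ for all $\lambda\in[0,1)$, although \eqref{cond_LPR} says nothing about $\lambda=0$.

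Here is a counterexample. Take $X=\mathbb{R}^2$ with the Euclidean norm, $\Omega$ the open disk of radius $1$ centred at $(1,0)$, and $p=0$.
\begin{itemize}
\item $\overline{\Omega}$ is convex and contains $p$, so it is $p$-star convex.
\item For $x\in\overline{\Omega}\setminus\{0\}$ one has $\|\lambda x-(1,0)\|^2-1=\lambda(\lambda\|x\|^2-2x_1)$, and $x_1\geq \|x\|^2/2>0$. So $\lambda_x=2x_1/\|x\|^2$ is the unique positive parameter giving a boundary point, and \eqref{cond_LPR} holds.
\item Yet $p\in\partial\Omega$, so $\overline{\Omega}$ is not strictly star-shaped with respect to $p$.
\end{itemize}

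You must therefore add $p\in\Omega$ as an explicit hypothesis of the converse. Equivalently, given $p$-star convexity, you can demand uniqueness in \eqref{cond_LPR} among all $\lambda\geq 0$; this excludes $p\in\partial\Omega$, because a positive exit parameter always exists. Do not attribute this hypothesis to the remark after the definition: that remark derives $p\in\Omega$ from strict star-shapedness, so it only serves the other direction. Your ``Alternatively'' sentence does not address the case $p\in\partial\Omega$ and should be dropped.

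With $p\in\Omega$ assumed, your converse is complete. Every $x\in\partial\Omega$ then differs from $p$, your main step handles $\lambda\in(0,1)$, and $\lambda=0$ is the added hypothesis itself.
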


\noindent
{\bf Proof.} Suppose that $\overline{\Omega}$ is a strictly star-shaped set with  respect to $p$. Let us prove that it fulfills condition \eqref{cond_LPR}. Let $x\in \overline{\Omega}\setminus\{p\}$ be fixed. First, let us show the existence of the positive number $\lambda_x$. We already know that $\overline{\Omega}$ is a $p$-star convex set, so we have that $(1-\lambda)\,p+\lambda\,x\in\overline{\Omega}$ for all $\lambda\in[0,1]$. On the other hand, since $\overline{\Omega}$ is bounded, there exists $\lambda>1$ large enough such that $(1-\lambda)\,p+\lambda\,x\notin\overline{\Omega}$. Then it suffices to choose \[\lambda_x=\inf\{\lambda>1:(1-\lambda)\,p+\lambda\,x\notin\overline{\Omega} \},\]
in order to obtain that $(1-\lambda_x)\,p+\lambda_x\,x\in \partial\,\Omega$.

Now, let us prove uniqueness. Suppose that $\lambda_x^1,\lambda_x^2>0$ satisfy $(1-\lambda_x^i)\,p+\lambda_x^i\,x\in \partial\,\Omega$, $i=1,2$, with $\lambda_x^1\neq \lambda_x^2$. Let us assume that $\lambda_x^1<\lambda_x^2$ and denote $\lambda=\lambda_x^1/ \lambda_x^2$. Hence, we have that $(1-\lambda_x^2)\,p+\lambda_x^2\,x\in \partial\,\Omega$ and $\lambda\in(0,1)$, so the fact that $\overline{\Omega}$ is a strictly star-shaped set with respect to $p$ ensures that
\[(1-\lambda)\,p+\lambda\left[(1-\lambda_x^2)\,p+\lambda_x^2\,x \right]\in\Omega \]
or, equivalently, $(1-\lambda_x^1)\,p+\lambda_x^1\,x\in \Omega$. Finally, since $\Omega$ is an relatively open set, we reach a contradiction with $(1-\lambda_x^1)\,p+\lambda_x^1\,x\in \partial\,\Omega$.

Suppose now that $\overline{\Omega}$ is $p$-star convex and satisfies condition \eqref{cond_LPR}. Let $x\in\partial\,\Omega$, as $\overline{\Omega}$ is a $p$-star convex set it, follows that $(1-\lambda)\,p+\lambda \,x\in \overline{\Omega}$ for all $\lambda\in[0,1]$. From \eqref{cond_LPR} we know that only for $\lambda=1$ we have that $(1-\lambda)\,p+\lambda\, x\in \partial\Omega$. Then $(1-\lambda)\,p+\lambda\, x\in \Omega$ for $\lambda\in[0,1)$.
\qed

\begin{remark}
	\label{rem_2.1}
	Let $E$ be a convex set in $X$ and $\overline{\Omega}$ a strictly star-shaped set with respect to a point $p\in E$. It is clear that $\overline{\Omega}^\star:=\overline{\Omega}\cap E$ is a strictly star-shaped set with respect to $p$ over $E$ and fulfills the following slightly modified version of condition \eqref{cond_LPR}
	\begin{equation}\label{cond_LPR2}
		\text{for all } x\in \overline{\Omega}^\star\setminus\{p\}, \text{ there is a unique } \lambda_x>0 \text{ such that } (1-\lambda_x)\,p+\lambda_x\,x\in \partial_E\,\Omega.
	\end{equation} 
\end{remark}

Let us now recall the concept of a cone in a normed space. A closed and convex subset $K$ of a normed linear space $(X,\left\|\cdot\right\|)$ is a \textit{wedge} if $\lambda\,u\in K$ for every $u\in K$ and for all $\lambda\geq 0$. Furthermore, a wedge $K$ is said to be a \textit{cone} if, in addition, it satisfies that $K\cap(-K)=\{0\}$. 

Next, we consider strictly star-shaped sets over a cone. Let $K$ be a cone in the normed space $X$ and let $\Omega\subset K$ be a nonempty relatively open bounded set such that $\overline{\Omega}$ is strictly star-shaped (over the cone $K$). Since strictly star-shaped sets in a cone satisfy condition \eqref{cond_LPR2} with $p=0$, the following result can be derived in a straightforward manner from Remark \ref{rem_2.1} together with \cite[Theorem 2.2]{LPR} and  \cite[Proposition 2.4]{LR}.

\begin{proposition}
	For every $x\in\overline{\Omega}\setminus\{0\}$ there exists a unique number $\beta_x\in[1,+\infty)$ such that $\beta_x\,x\in \partial_K\,\Omega$. Moreover, the mapping 
	\[\beta:\overline{\Omega}\setminus\{0\}\rightarrow[1,+\infty), \qquad \beta(x):=\beta_x \]	
	is continuous and $\beta(x)\to +\infty$ as $x\to 0$. 
\end{proposition}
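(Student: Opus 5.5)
Since $0\in\Omega$ and $K$ is a cone, every ray $\{t x: t\ge 0\}$ with $x\in\overline{\Omega}\setminus\{0\}$ lies in $K$. I would therefore work entirely on rays, using Remark~\ref{rem_2.1} with $E=K$ and $p=0$.

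\textbf{Existence and uniqueness.} Condition \eqref{cond_LPR2} gives a unique $\lambda_x>0$ with $\lambda_x x\in\partial_K\Omega$; set $\beta_x:=\lambda_x$. To see $\beta_x\ge1$, suppose $\beta_x<1$. Then $y:=\beta_x x\in\partial_K\Omega$, and $x=\beta_x^{-1}y$ with $\beta_x^{-1}>1$. Strict star-shapedness puts $[0,y)$ in $\Omega$. On the other side, if $x\in\overline{\Omega}$ lay beyond $y$, apply the supremum construction from the proof of the first Proposition (with $\overline{\Omega}$ bounded) to the point $x$. It produces some $\mu\ge1$ with $\mu x\in\partial_K\Omega$. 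Then $\mu\neq\beta_x$ because $\mu\ge 1>\beta_x$, contradicting uniqueness. Hence $\beta_x\ge1$, and $\beta$ is well defined.

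\textbf{Continuity.} Fix $x_n\to x$ in $\overline{\Omega}\setminus\{0\}$. By boundedness, $\beta_{x_n}\|x_n\|\le \sup_{\overline\Omega}\|\cdot\|$, so $(\beta_{x_n})$ is bounded, since $\|x_n\|\to\|x\|>0$. Take any subsequence with $\beta_{x_{n_k}}\to\gamma\ge1$. Then $\beta_{x_{n_k}}x_{n_k}\to\gamma x$, and $\gamma x\in\partial_K\Omega$ because the relative boundary is closed in $K$. Uniqueness gives $\gamma=\beta_x$. Since every subsequence has a further subsequence converging to the same limit, $\beta_{x_n}\to\beta_x$.

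\textbf{Blow-up at $0$.} Since $0\in\Omega$ and $\Omega$ is relatively open in $K$, there is $\rho>0$ with $B_\rho(0)\cap K\subset\Omega$. So every point of $\partial_K\Omega$ has norm at least $\rho$, which gives $\beta_x\|x\|\ge\rho$, that is, $\beta_x\ge\rho/\|x\|\to+\infty$ as $x\to0$.

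The main obstacle is the continuity step, specifically justifying that the limit $\gamma x$ lies on $\partial_K\Omega$ and that uniqueness applies there. This requires $\gamma x\in\overline{\Omega}\setminus\{0\}$, which holds because $\gamma\ge1$ and $x\neq0$, together with the star-shaped structure. If needed, I would instead quote \cite[Theorem 2.2]{LPR} and \cite[Proposition 2.4]{LR} directly for this step.
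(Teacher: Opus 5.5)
Your proof is correct, but it follows a different route from the paper, which gives no argument of its own. The paper observes that strictly star-shaped sets over $K$ satisfy \eqref{cond_LPR2} with $p=0$. It then imports existence, continuity and blow-up of $\beta$ from \cite[Theorem 2.2]{LPR} and \cite[Proposition 2.4]{LR}.

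You prove everything directly:
\begin{itemize}
\item existence of some $\mu\ge 1$ with $\mu x\in\partial_K\Omega$ from the infimum construction and boundedness;
\item uniqueness from strict star-shapedness;
\item continuity from the boundedness of $(\beta_{x_n})$, the closedness of $\partial_K\Omega=\overline{\Omega}\setminus\Omega$ and uniqueness, via a subsequence argument;
\item the blow-up bound $\beta(x)\ge\rho/\|x\|$ from a relative ball around $0$ contained in $\Omega$.
\end{itemize}
The citation is shorter. However, it leaves the reader to check that results formulated for star convex sets in a different setting really apply relative to the cone. Your version is self-contained and works directly in the relative topology of $K$. It also makes visible that continuity needs only boundedness, closedness of $K$ and uniqueness along rays.

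Three small cleanups are worth making.

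First, the contradiction detour for $\beta_x\ge 1$ is unnecessary. The infimum construction already produces $\mu\ge 1$ with $\mu x\in\partial_K\Omega$, using only $x\in\overline{\Omega}$ and boundedness, and uniqueness then forces $\beta_x=\mu$.

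Second, Remark~\ref{rem_2.1} is stated for traces $\overline{\Omega}\cap E$ of sets that are strictly star-shaped in all of $X$. So rather than invoking it, note that the uniqueness argument from the first proposition goes through verbatim relative to $K$: if $\lambda^1<\lambda^2$, then $\lambda^1x=(\lambda^1/\lambda^2)\lambda^2x\in\Omega$, contradicting $\lambda^1x\in\partial_K\Omega$.

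Third, your closing worry is slightly misplaced. Uniqueness is applied to the ray through $x\in\overline{\Omega}\setminus\{0\}$, which is given. The only fact needed about $\gamma x$ is that it lies in $\partial_K\Omega$, and you already obtain this from closedness.
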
 

%\begin{remark}
%	Observe that the map $\beta$ can be continuously extended to $K\setminus\{0\}$ as
%	\[\beta^K:K\setminus\{0\}\to (0,+\infty), \quad \beta^K(u)=\beta\left(\dfrac{d(0,\partial_K\,\Omega)}{\left\|u\right\|}u\right)\dfrac{d(0,\partial_K\,\Omega)}{\left\|u\right\|}. \]
%	First, note that being $\overline{\Omega}^K$ a strictly star shaped set it satisfies that $0\in\Omega$ and thus, since $\Omega$ is relatively open, it follows that $d(0,\partial_K\,\Omega)>0$. In addition, it is clear that if $u\in K\setminus\{0\}$, then
%	\[v:=\dfrac{d(0,\partial_K\,\Omega)}{\left\|u\right\|}u\in \overline{\Omega}^K\setminus\{0\}. \] 
%	Indeed, $v=\mu\,u$, with $\mu=d(0,\partial_K\,\Omega)/\left\|u\right\|>0$, so the definition of cone implies that $v\in K\setminus\{0\}$. On the other hand, the closed ball $\overline{B(0,r)}$, with radius $r:=d(0,\partial_K\,\Omega)$, satisfies that $K\cap\overline{B(0,r)}\subset \overline{\Omega}^K$ and thus $v\in\overline{\Omega}^K$. Hence, the map $\beta^K$ is well-defined and, moreover, it is continuous since so is $\beta$.
%	
%	Furthermore, for every $u\in K\setminus\{0\}$, the map $\beta^K$ gives a positive number $\beta^K(u)>0$ such that \[\beta^K(u)\,u\in \partial_K\,\Omega,\]
%	since $\beta^K(u)\,u=\beta(v)\,v:=\beta_v\,v\in \partial_K\,\Omega$. Therefore, by uniqueness, we have
%	$\beta^K(u)=\beta(u)$ provided that $u\in \overline{\Omega}^K\setminus\{0\}$. 
%\end{remark}

Using the map $\beta$, we now show that if $\overline{\Omega}$ is a strictly star-shaped set over a cone $K$, then $\partial_K\,\Omega$ is a retract of $\overline{\Omega}$. The construction of the retraction is inspired by \cite[Example 3]{fel}. 

This result will be crucial in the next section in order to establish our fixed point theorems for operator systems in strictly star-shaped sets.

\begin{proposition}
	\label{prop_retract}
	Let $K$ be a cone in the normed space $X$ and  $\Omega\subset K$ a relatively open bounded set such that $\overline{\Omega}$ is strictly star-shaped over $K$. Let  $r\in\mathbb{R}_+:=(0,+\infty)$ be such that $\overline{K}_r:=\left\{x\in K:\left\|x\right\|\leq r \right\}\subset \Omega$. For $h\in K\setminus\{0\}$ fixed, the map $\rho^r_h:\overline{\Omega}\rightarrow \partial_K\,\Omega$ defined as
	\begin{equation}
		\label{equa_retra}
\rho^r_h(x):=\left\{\begin{array}{ll} \beta\left(r\frac{x+(r-\|x\|)h}{\|x+(r-\|x\|)h\|}\right)\left(r\frac{x+(r-\|x\|)h}{\|x+(r-\|x\|)h\|}\right), & \text{ if } x\in \overline{K}_r, \\ \beta(x)\,x, & \text{ if } x\in \overline{\Omega}\backslash\overline{K}_r, \end{array} \right.
	\end{equation}
 is a retraction. 
\end{proposition}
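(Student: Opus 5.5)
We have to show that $\rho^r_h$ is well defined, takes values in $\partial_K\,\Omega$, is continuous, and fixes every point of $\partial_K\,\Omega$. All of this rests on the previous Proposition, which gives $\beta:\overline{\Omega}\setminus\{0\}\to[1,+\infty)$ continuous with $\beta(x)\,x\in\partial_K\,\Omega$.

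\emph{Well-definedness.} For $x\in\overline{K}_r$ set $y(x):=x+(r-\|x\|)h$. Since $K$ is a convex wedge and $r-\|x\|\ge 0$, we have $y(x)\in K$. We next check $y(x)\neq 0$. If $\|x\|<r$, then $(r-\|x\|)h\neq 0$. If $y(x)=0$, then $x=-(r-\|x\|)h\in K\cap(-K)=\{0\}$, which forces $(r-\|x\|)h=0$, a contradiction. If $\|x\|=r$, then $y(x)=x\neq0$.

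Hence $z(x):=r\,y(x)/\|y(x)\|\in K$ and $\|z(x)\|=r$. It follows that $z(x)\in\overline{K}_r\setminus\{0\}\subset\overline{\Omega}\setminus\{0\}$, so $\beta(z(x))$ is defined. On $\overline{\Omega}\setminus\overline{K}_r$ we have $x\neq 0$, so $\beta(x)\,x$ is defined. In both cases the image lies in $\partial_K\,\Omega$ by the defining property of $\beta$.

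\emph{Retraction property.} Since $\overline{K}_r\subset\Omega$ and $\Omega$ is relatively open, $\partial_K\,\Omega\cap\overline{K}_r=\emptyset$. So any $x\in\partial_K\,\Omega$ falls into the second branch. By uniqueness of $\beta_x$ we get $\beta(x)=1$, and therefore $\rho^r_h(x)=x$.

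\emph{Continuity.} This is the main point. On the closed set $\overline{K}_r$, the map $x\mapsto z(x)$ is continuous: $y$ is continuous and nonvanishing, and the norm is continuous. Composing with $\beta$ keeps continuity, so the first branch is continuous on $\overline{K}_r$. The second branch is continuous on $\overline{\Omega}\setminus\{0\}$, and in particular on $\overline{\Omega}\setminus K^\circ_r$, where $K^\circ_r:=\{x\in K:\|x\|<r\}$. The two sets $\overline{K}_r$ and $\overline{\Omega}\setminus K^\circ_r$ are closed in $\overline{\Omega}$ and cover it. Their intersection is $\{x\in K:\|x\|=r\}$, and there the two formulas agree, since $y(x)=x$ and $z(x)=x$ give $\beta(z(x))\,z(x)=\beta(x)\,x$.

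On points with $\|x\|=r$ the definition uses the first branch, consistent with this. The pasting lemma therefore gives continuity of $\rho^r_h$ on $\overline{\Omega}$.

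The only delicate points are the nonvanishing of $y(x)$, which needs $K\cap(-K)=\{0\}$, and avoiding the singularity of $\beta$ at $0$. This is exactly why the first branch replaces $x$ by a point of norm $r$ before applying $\beta$.
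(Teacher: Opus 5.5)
Your proposal is correct and follows the paper's own argument: nonvanishing of $x+(r-\|x\|)h$ via $K\cap(-K)=\{0\}$, continuity inherited from $\beta$, and $\beta(x)=1$ for $x\in\partial_K\,\Omega$. You also make explicit two points the paper leaves implicit: that $\partial_K\,\Omega\cap\overline{K}_r=\emptyset$, and that the two branches agree on $\{\|x\|=r\}$, so the pasting lemma applies.
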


\noindent
{\bf Proof.} First of all, notice that $\|x+(r-\|x\|)h\|\neq 0$ for all $x\in \overline{K}_r$. Indeed, if not we have $-x=(r-\|x\|)h\in K$ which jointly with $x\in K$ and that $K$ is a cone, imply $x=0$. Consequently, $\|rh\|>0$ since $r>0$ and $h\in K\backslash\{0\}$ and $\rho^r_h$ is well-defined.

Clearly, $\rho^r_h$ is continuous in $\overline{\Omega}$ due to the continuity of the function $\beta$.

Finally, if $x\in \partial_K\,\Omega$, then the definition of $\beta$ gives $\beta(x)=1$ and so $\rho^r_h(x)=\beta(x)\,x=x$, that is, the restriction of $\rho^r_h$ to the set $\partial_K\,\Omega$ is the identity map. Therefore, $\rho^r_h$ is a retraction.
\qed

\begin{figure}[h]
	\centering
	\subfigure[Case $x\in K_r$.]{
	\includegraphics[scale=1]{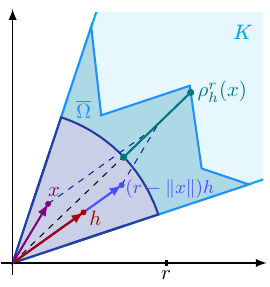}}
	\hspace{2cm}
	\subfigure[Case $x\in\overline{\Omega}\backslash K_r$.]{\includegraphics[scale=1]{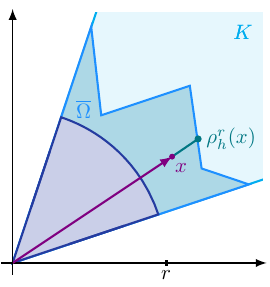}
	}
	\caption{Illustration of the retraction $\rho^r_h$ on $\overline{\Omega}$, a strictly star convex set over a cone $K$ in $\mathbb{R}^2$.}
\end{figure}

%\begin{remark}
%Observe that it is immediate from the expression of the retraction that $\|\rho^r_h(x)\|>\|x\|$ for all $x\in\Omega$.
%\end{remark}

\begin{remark}
	Note also that if $K$ is a wedge instead of a cone, then $\partial_K\,\Omega$ may not be a retract of $\overline{\Omega}$. For instance, it is well-known, due to the classical Borsuk no-retraction theorem \cite[Theorem 7.2]{GraDug}, that there is no retraction from $\partial\,B$ into $B$, with $B$ the closed unit ball in $\mathbb{R}^n$.
\end{remark}

\section{Fixed point theorems for operator systems}

Let $(X_1,\left\|\cdot\right\|_{1})$ and $(X_2,\left\|\cdot\right\|_{2})$ be normed linear spaces and $K_1\subset X_1$ and $K_2\subset X_2$ two cones. When no confusion may occur, both norms $\left\|\cdot\right\|_{1}$ and $\left\|\cdot\right\|_{2}$ will be simply denoted as $\left\|\cdot\right\|$. Moreover, we will employ the notation $K:=K_1\times K_2$ for the corresponding cone in the cartesian product $X:=X_1\times X_2$.

\subsection{Fixed point index computations}

For completeness, we briefly recall some notions and properties concerning the fixed point index in cones. If $U$ is a relatively open bounded subset of $K$ and $T:\overline{U}\rightarrow K$ is a compact map without fixed points on $\partial_K\,U$, the \textit{fixed point index} of $T$ on $U$ with respect to the cone $K$, $i_K(T,U)$, is well-defined.

	\begin{proposition}\label{prop_index}
	Let $K$ be a cone of a normed space, $U\subset K$ be a bounded relatively open set and $N:\overline{U}\rightarrow K$ be a compact map such that $N$ has no fixed points on $\partial_{K}\,U$. Then the fixed point index of $N$ on the set $U$ with respect to $K$, $i_{K}(N,U)$, has the following properties:
	\begin{enumerate}
		\item (Additivity) Let $U$ be the disjoint union of two open sets $U_1$ and $U_2$. If $0\not\in(I-N)(\overline{U}\setminus(U_1\cup U_2))$, then \[i_{K}(N,U)=i_{K}(N,U_1)+i_{K}(N,U_2).\]
		\item (Existence) If $i_{K}(N,U)\neq 0$, then there exists $x\in U$ such that $Nx=x$.
		\item (Homotopy invariance) If $H:\overline{U}\times[0,1]\rightarrow K$ is a compact homotopy and $0\not\in(I-H)(\partial\,U\times[0,1])$, then
		\[i_{K}(H(\cdot,0),U)=i_{K}(H(\cdot,1),U).\]
		\item (Normalization) If $N$ is a constant map with $Nx=\bar{x}$ for every $x\in\overline{U}$, then
		\[i_{K}(N,U)=\left\{\begin{array}{ll} 1, & \text{ if } \bar{x}\in U, \\ 0, & \text{ if } \bar{x}\not\in\overline{U}. \end{array} \right. \]
		%		\item (Multiplicativity) For $j=1,2$, let $C_j$ be a wedge, $U_j\subset C_j$ be a open bounded set and $S_j:U_j\rightarrow C_j$ be a compact map fixed point free on the boundary of $U_j$. Then
		%		\[i_{C_1\times C_2}(S_1\times S_2,U_1\times U_2)=i_{C_1}(S_1,U_1)\cdot i_{C_2}(S_2,U_2). \]
	\end{enumerate}
\end{proposition}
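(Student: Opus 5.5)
These four properties are the standard axioms of the fixed point index for compact maps on retracts of normed spaces, so the plan is to construct $i_K$ from the Leray--Schauder degree through a retraction onto $K$ and then transfer the degree properties. The one genuinely new check is that the construction does not depend on the retraction chosen, and that check also underlies the other properties. The framework is the one in Granas--Dugundji \cite{GraDug} or Guo--Lakshmikantham \cite{guolak}.

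\textbf{Construction.} By the Dugundji extension theorem, every closed convex subset of a normed space is a retract of the whole space. Hence there is a continuous retraction $\pi:X\to K$, and $K$ is an ANR. Given $U$ and $N$ as in the statement, choose an open set $V\subset X$ with $V\cap K=U$. The set $W:=\pi^{-1}(U)\cap V$ is open and bounded after intersecting with a large ball; boundedness is harmless, since the fixed points of $N\circ\pi$ lie in $N(\overline U)$, which is relatively compact. Set
\[ i_K(N,U):=\deg_{LS}(I-N\circ\pi,\,W,\,0). \]
This is well defined. Indeed, $N\circ\pi$ is compact on $\overline W$. Moreover, any zero $x\in\partial W$ of $I-N\circ\pi$ satisfies $x=N(\pi x)\in K$, so $\pi x=x$ and $x$ would be a fixed point of $N$ on $\partial_K U$, which is excluded. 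If the ambient space is only normed and not Banach, I would use the Leray--Schauder degree for compact perturbations of the identity in normed spaces, again obtained by finite-dimensional approximation, or Granas' fixed point index for compact maps on ANRs.

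\textbf{Independence of choices.} I expect this to be the main obstacle. Every fixed point of $N\circ\pi$ in $W$ lies in $U$. Hence excision shows that the degree does not change when $W$ is shrunk to any open set containing these fixed points. Now take two retractions $\pi_0,\pi_1$. The maps $\pi_t:=(1-t)\pi_0+t\pi_1$ are again retractions onto $K$, because $K$ is convex. The homotopy $N\circ\pi_t$ is compact, and any fixed point of it lies in $K$, hence is a fixed point of $N$. Working on $W_0\cap W_1$, homotopy invariance of the degree then gives the same value for both retractions.

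\textbf{Properties.} Each of the four properties follows from the corresponding property of $\deg_{LS}$.
\begin{enumerate}
\item \emph{Additivity.} The sets $\pi^{-1}(U_1)$ and $\pi^{-1}(U_2)$ are disjoint open sets. Any fixed point of $N\circ\pi$ outside their union lies in $\overline U\setminus(U_1\cup U_2)$, where $N$ has no fixed points by hypothesis. So additivity of the degree applies.
\item \emph{Existence.} If the index is nonzero, the degree yields $x\in W$ with $x=N\pi x$. Then $x\in K$, so $x=\pi x\in U$ and $Nx=x$.
\item \emph{Homotopy invariance.} Apply the homotopy property of the degree to $H(\pi(\cdot),t)$. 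The boundary condition transfers exactly as in the well-definedness argument above.
\item \emph{Normalization.} For a constant map $N\equiv\bar x$, the degree $\deg_{LS}(I-\bar x,W,0)$ equals $1$ if $\bar x\in W$ and $0$ if $\bar x\notin\overline W$. Since $\bar x\in K$, we have $\bar x\in W$ if and only if $\bar x\in U$, and $\bar x\in \overline W$ only if $\bar x\in \overline U$. This gives the stated values.
\end{enumerate}
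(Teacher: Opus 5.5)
Your route is the standard construction used in the sources the paper relies on: a Dugundji retraction $\pi:X\to K$, $i_K(N,U):=\deg(I-N\circ\pi,W,0)$, and transfer of the degree properties. The paper itself gives no proof and just recalls the properties from \cite{amann,GraDug,guolak}. Your verifications of additivity, existence, homotopy invariance and normalization are correct for a fixed retraction.

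The step that fails as written is the independence of the retraction. For $x\in W_0\cap W_1$ you only know $\pi_0(x),\pi_1(x)\in U$. The point $\pi_t(x)=(1-t)\pi_0(x)+t\pi_1(x)$ is a convex combination of two points of $U$, and convexity of $K$ only gives $\pi_t(x)\in K$. Since $U$ is not convex (in this paper it is typically $\mathcal{O}\setminus\overline{\Omega}$ or a product of such sets), $\pi_t(x)$ can leave $\overline U$. So $N\circ\pi_t$ is not even defined on $\overline{W_0\cap W_1}$.

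The repair uses the excision remark you already made.
\begin{enumerate}
\item The fixed point set $F=\{x\in\overline U: Nx=x\}$ is compact, being a closed subset of $\overline{N(\overline U)}$. It lies in $U$, and $\pi_t(x)=x\in U$ on $F\times[0,1]$.
\item The set $\{(x,t):\pi_t(x)\in U\}$ is open in $X\times[0,1]$, so compactness of $F\times[0,1]$ gives $\delta>0$ with $\pi_t(x)\in U$ whenever $d(x,F)\le\delta$ and $t\in[0,1]$. Shrinking $\delta$ if necessary, $O:=\{x: d(x,F)<\delta\}\subset W_0\cap W_1$.
\item On $\overline O$ the homotopy $N\circ\pi_t$ is defined and compact. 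It has no fixed points on $\partial O$: a fixed point $x\in\overline O$ lies in $K$, so $x=\pi_t(x)\in U$ and $Nx=x$, hence $x\in F\subset O$.
\item Excision gives $\deg(I-N\circ\pi_i,W_i,0)=\deg(I-N\circ\pi_i,O,0)$ for $i=0,1$, and homotopy invariance on $O$ finishes the argument.
\end{enumerate}

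Contrary to your remark, this independence does not underlie the other properties. All four listed properties concern a single cone $K$, so fixing one retraction per cone already yields an index satisfying them. Independence only makes the definition canonical.
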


We now present some computations of the fixed point index that will be useful in our arguments. Proofs can be found, for instance, in \cite[Lemma 2.3.1 and 2.3.2]{guolak}.

\begin{proposition}\label{prop_ind01}
	Let $K$ be a cone, $U\subset K$ be a bounded relatively open set such that $0\in U$ and $N:\overline{U}\rightarrow K$ be a compact map without fixed points on $\partial_{K}\,U$.
	\begin{enumerate}[$(a)$]
		\item If $Nx\neq \lambda\, x$ for all $x\in \partial_{K}\, U$ and all $\lambda> 1$, then $i_{K}(N,U)=1$.
		\item If there exists a compact map $L:\overline{U}\rightarrow K$ such that 
		\[\inf_{x\in \overline{U}}\left\| L x\right\|>0, \text{ and} \]
		\[x-N x\neq \mu\, L x \ \text{ for all } x\in \partial_{K}\, U \text{ and every } \mu> 0 ,\] 
		then $i_{K}(N,U)=0$.
	\end{enumerate}	
\end{proposition}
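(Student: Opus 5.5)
Both parts reduce to the homotopy invariance and normalization properties of Proposition~\ref{prop_index}. In each case we write down an explicit compact homotopy, check that it has no fixed points on $\partial_K U$, and then evaluate the index at the simple end of the homotopy.

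\emph{Part (a).} We use the homotopy $H(x,t)=t\,Nx$ for $x\in\overline{U}$ and $t\in[0,1]$. It takes values in $K$, since $K$ is a cone and $Nx\in K$. It is compact, because $N(\overline{U})$ is relatively compact and $(t,y)\mapsto ty$ is continuous on $[0,1]\times\overline{N(\overline{U})}$.

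We check that there are no fixed points on the boundary. Suppose $x\in\partial_K U$ and $x=t\,Nx$.
\begin{enumerate}
\item[$\bullet$] If $t=0$, then $x=0$, which is impossible because $0\in U$ and $U$ is relatively open, so $0\notin\partial_K U$.
\item[$\bullet$] If $t=1$, then $x$ is a fixed point of $N$ on $\partial_K U$, which is excluded by hypothesis.
\item[$\bullet$] If $t\in(0,1)$, then $Nx=\lambda x$ with $\lambda=1/t>1$, which contradicts the hypothesis.
\end{enumerate}
Homotopy invariance then gives $i_K(N,U)=i_K(0,U)$. Normalization gives $i_K(0,U)=1$, since $0\in U$.

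\emph{Part (b).} We first set $c:=\inf_{x\in\overline U}\|Lx\|>0$ and $M:=\sup_{x\in\overline{U}}\|x\|+\sup_{x\in\overline U}\|Nx\|<\infty$. Both suprema are finite, since $U$ is bounded and $N$ is compact. We fix $\mu_0>M/c$ and consider the homotopy $H(x,t)=Nx+t\mu_0 Lx$, which is compact and $K$-valued.

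A fixed point on $\partial_K U$ would satisfy $x-Nx=t\mu_0 Lx$.
\begin{enumerate}
\item[$\bullet$] For $t=0$ this is excluded, since $N$ has no fixed points on $\partial_K U$.
\item[$\bullet$] For $t>0$ it is excluded by the hypothesis with $\mu=t\mu_0$.
\end{enumerate}
Hence $i_K(N,U)=i_K(N+\mu_0L,U)$.

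Next we show that $N+\mu_0 L$ has no fixed points in $\overline U$ at all. If $x=Nx+\mu_0Lx$, then $\mu_0 c\le\mu_0\|Lx\|=\|x-Nx\|\le M$, which contradicts the choice of $\mu_0$. Since $i_K(N+\mu_0L,U)$ is well defined, the existence property applied to $N+\mu_0L$ forces $i_K(N+\mu_0L,U)=0$. Therefore $i_K(N,U)=0$.

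\emph{Main obstacle.} There is no real difficulty here. The one point needing care in (b) is choosing $\mu_0$ large enough to rule out fixed points on all of $\overline U$, not merely on the boundary; this relies on the uniform bounds $c>0$ and $M<\infty$. In (a), the only subtle step is the case $t=0$, which uses $0\in U$.
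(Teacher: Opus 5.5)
Your proof is correct and follows essentially the standard argument of the Guo--Lakshmikantham lemmas that the paper cites without reproducing: for (a), the radial homotopy $tN$ to the zero map followed by normalization; for (b), the homotopy $N+t\mu_0L$, with $\mu_0$ chosen from the a priori bound so that $N+\mu_0L$ has no fixed points on $\overline U$ and therefore has index $0$ by the existence property. In the classical statement the perturbation is a fixed direction $u_0\in K\setminus\{0\}$; your argument is the same one with $Lx$ in its place, and the uniform lower bound $\inf_{\overline U}\|Lx\|>0$ is exactly what makes the choice of $\mu_0$ work.
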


Further details on the fixed point index and the preceding computations can be found in \cite{amann,GraDug,guolak}.

Proposition \ref{prop_ind01} enables the proof of Krasnosel'ski\u{\i}’s compression-expansion fixed point theorem in the classical setting \cite{guolak}. For operators defined on the Cartesian product of normed spaces, as in our case, the computation of the fixed point index under component-wise combinations of the conditions in Proposition~\ref{prop_ind01} was established in \cite[Proposition 2.3]{LFP_JRL}. This result, which we recall below, is also essential for the proof of our main results.

	\begin{proposition}\label{prop_ind_sys}
	Let $U\times V$ be a bounded relatively open subset of the cone $K= K_1\times K_2$ in the normed spaces product $X=X_1\times X_2$, such that $0\in U$. Assume that $N=(N_1,N_2):\overline{U\times V}\rightarrow K$ and $L:\overline{U\times V}\rightarrow K_2$ are compact mappings satisfying the following conditions:
	\begin{enumerate}[$(a)$]
		\item $N_1 x \neq \lambda\, x_1$ for all $x_1\in \partial_{K_1}U$, $x_2\in\overline{V}$ and all $\lambda>1$;
		\item \begin{enumerate}[$(i)$]
			\item $\inf_{x\in \overline{U\times V}}\left\|L x\right\|>0$;
			\item $x_2-N_2 x\neq \mu\, L x$ for all $x_1\in\overline{U}$, $x_2\in\partial_{K_2} V$ and every $\mu>0$.
		\end{enumerate}
	\end{enumerate}	
	If $N$ has no fixed points on $\partial_K\, (U\times V)$, then $i_{K}(N,U\times V)=0$.
\end{proposition}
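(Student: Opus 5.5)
The plan is to argue by homotopy invariance and reduce to the normalization/existence properties of Proposition~\ref{prop_index}. Let $R>0$ be such that $\|x_2\|\le R$ for every $x_2\in \overline V$, and set $M:=\sup_{x\in\overline{U\times V}}\|Lx\|<\infty$ (finite because $L$ is compact). The homotopy I would use is
\[
H(x,t):=\bigl(N_1x,\;N_2x+t\,\mu_0\,Lx\bigr),\qquad t\in[0,1],
\]
with $\mu_0>0$ a constant to be fixed large. Since $L$ is compact, $H$ is a compact homotopy with values in $K$. The index is then computed at $t=0$ (where it equals $i_K(N,U\times V)$) and at $t=1$ (where it will vanish).

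First I check that $H$ has no fixed points on $\partial_K(U\times V)$. Since $\partial_K(U\times V)=(\partial_{K_1}U\times\overline V)\cup(\overline U\times\partial_{K_2}V)$, I distinguish two cases.
\begin{enumerate}
\item[$\bullet$] If $x_2\in\partial_{K_2}V$ and $H(x,t)=x$, then $x_2-N_2x=t\mu_0Lx$. For $t>0$ this contradicts $(b)(ii)$. For $t=0$ it gives $x_2=N_2x$, and then $x$ would be a fixed point of $N$ on the boundary, which is excluded by hypothesis provided the first component also matches.
\item[$\bullet$] If $x_1\in\partial_{K_1}U$, a fixed point requires $N_1x=x_1$. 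Condition $(a)$ only excludes $\lambda>1$, so I cannot rule out $N_1x=x_1$ on this part of the boundary. I would therefore modify the first component as well.
\end{enumerate}
The cleanest fix is a two-stage homotopy. First, deform $N_1$ to $0$ via $(x,s)\mapsto \bigl(sN_1x,\,N_2x\bigr)$. A fixed point with $x_1\in\partial_{K_1} U$ would give $N_1x=\tfrac1s x_1$ with $\tfrac1s\ge1$: the case $s<1$ is excluded by $(a)$, the case $s=1$ by the no-boundary-fixed-point assumption, and $s=0$ would force $x_1=0\in U$, impossible on $\partial_{K_1} U$. For $x_2\in\partial_{K_2}V$, equality $x_2=N_2x$ is excluded by $(b)(ii)$? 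Not directly, since $\mu>0$. Here I would handle the two components in the opposite order: first add the term $t\mu_0 Lx$ to the second component while keeping $N_1$. Fixed points at $t>0$ on $\overline U\times\partial V$ are excluded by $(b)(ii)$; on $\partial U\times \overline V$ we would have $N_1x=x_1$, which is still not excluded. So the order genuinely matters, and I would combine the two deformations into a single homotopy
\[
G(x,t)=\bigl(tN_1x,\;N_2x+(1-t)\mu_0Lx\bigr)\ \text{ reversed in time, i.e. } G(x,t)=\bigl((1-t)N_1x+\dots\bigr),
\]
and use the alternative: at any boundary fixed point either $x_1\in\partial U$ with $N_1x=\lambda x_1$, $\lambda\ge1$, or $x_2\in \partial V$ with $x_2-N_2x=\mu Lx$, $\mu\ge0$. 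The boundary instances ($\lambda=1$, $\mu=0$ occurring simultaneously at $t=0$) reduce to fixed points of $N$ itself, which are excluded. This bookkeeping at the endpoint is the main obstacle, and I expect it to need care precisely in the mixed case.

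Finally, at the end of the homotopy the map is $(0,\,N_2x+\mu_0Lx)$. I choose $\mu_0>\bigl(R+\sup\|N_2\|\bigr)/\inf\|L\|$, using $(b)(i)$. Then every fixed point would satisfy $\|x_2\|\ge\mu_0\|Lx\|-\|N_2x\|>R$, which is impossible; this is where $\inf\|L\|>0$ is needed, since it makes $\mu_0\|Lx\|$ large uniformly in $x$. Hence the final map has no fixed points in $\overline{U\times V}$, so its index is $0$ by the existence property, and homotopy invariance gives $i_K(N,U\times V)=0$.
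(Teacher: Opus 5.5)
Your final argument is correct: the single homotopy $G(x,t)=\bigl((1-t)N_1x,\;N_2x+t\mu_0Lx\bigr)$ joins $N$ to $(0,\,N_2x+\mu_0Lx)$, and $\mu_0>(R+\sup\|N_2\|)/\inf\|L\|$ makes the endpoint map fixed-point free; this is the standard proof of the result, which the paper does not reprove but quotes from its earlier work. The ``mixed case'' you flag needs no extra care: for $t\in(0,1)$, $x_1\in\partial_{K_1}U$ contradicts $(a)$ with $\lambda=1/(1-t)>1$ and $x_2\in\partial_{K_2}V$ contradicts $(b)(ii)$ with $\mu=t\mu_0>0$; at $t=0$ the map is $N$ itself, and at $t=1$ there are no fixed points at all.
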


	\begin{remark}
		It is clear that the conclusion of Proposition \ref{prop_ind_sys} remains valid if the roles of $N_1$ and $N_2$ expressed by assumptions $(a)$ and $(b)$ are interchanged. 
	\end{remark}
	
	\subsection{Main results}
 For each $j\in\{1,2\}$, let $\Omega_j$ and $\mathcal{O}_j$ be relatively open bounded subsets of the cone $K_j\subset X_j$ such that 
\begin{enumerate}[(I)]
	\item $0\in\Omega_j\subset\overline{\Omega}_j\subset\mathcal{O}_j$; and
	\item $\overline{\Omega}_j$ is a strictly star-shaped set over $K_j$.
%	\item $K_i\cap \mathcal{O}_i$ is bounded, with $K_i$ a cone of $X_i$.
\end{enumerate}

Below, we present the main result of our paper, which, as we will see immediately afterwards, encompasses several well-known and widely used formulations of Krasnosel'ski\u{\i}’s theorem, generalizing them to domains more general than those previously considered in the literature \cite{PrecupFPT,PrecupSDC,JRL,LFP_JRL} within the context of product spaces.

\begin{theorem}
	\label{th_ppal}
	Assume that $T=(T_1,T_2):D:=\left(\overline{\mathcal{O}}_1\setminus\Omega_1\right) \times \left(\overline{\mathcal{O}}_2\setminus\Omega_2\right) \rightarrow K$ and $S=(S_1,S_2):D \rightarrow K$ are compact maps such that, for each $j\in\{1,2\}$, one of the following conditions holds in $D$:
	\begin{enumerate}[(A)] 
		\item
		\begin{enumerate}[(a)] 
			\item  $T_jx\neq \lambda x_j$ for $x_j\in\partial_{K_j}\mathcal{O}_j$ and $\lambda\geq 1$,
			\item 	
			\begin{enumerate}[(i)] 
				\item $\inf_{x\in D}\|S_jx\|>0$;
				\item $x_j-T_j x \neq \mu S_j x$ for $x_j\in\partial_{K_j}\Omega_j$ and $\mu \geq 0$.
			\end{enumerate}
		\end{enumerate}
		\item 
		\begin{enumerate}[(a)] 
		\item  $T_jx\neq \lambda x_j$ for $x_j\in\partial_{K_j}\Omega_j$ and $\lambda\geq 1$,
		\item 	
		\begin{enumerate}[(i)] 
			\item $\inf_{x\in D}\|S_jx\|>0$;
			\item $x_j-T_j x \neq \mu S_j x$ for $x_j\in\partial_{K_j}\mathcal{O}_j$ and $\mu \geq 0$.
		\end{enumerate}
	\end{enumerate}
	\end{enumerate}
	
	Then $i_K\left(T,\left(\mathcal{O}_1\setminus\overline{\Omega}_1\right)\times \left(\mathcal{O}_2\setminus\overline{\Omega}_2\right)\right)=(-1)^k$, where $k\in\{0,1,2\}$ is a counter of the number of indexes $j$ for which condition (B) is satisfied.
	
	In particular, the operator $T$ has at least one fixed point in $\left(\mathcal{O}_1\setminus\overline{\Omega}_1\right)\times \left(\mathcal{O}_2\setminus\overline{\Omega}_2\right)$.
\end{theorem}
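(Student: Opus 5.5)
The plan is to extend $T$ (and $S$) from $D$ to the full product $\overline{\mathcal{O}}_1\times\overline{\mathcal{O}}_2$ by means of the retractions of Proposition~\ref{prop_retract}. Then the index on the product of "annuli" is computed by additivity from indices on products of $\mathcal{O}_j$, $\Omega_j$ and their combinations.

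\textbf{Extension.} For each $j$ fix $r_j>0$ with $\overline{K_j}_{r_j}\subset\Omega_j$ and $h_j\in K_j\setminus\{0\}$. Let $\rho_j:=\rho^{r_j}_{h_j}:\overline{\Omega}_j\to\partial_{K_j}\Omega_j$ be the retraction of Proposition~\ref{prop_retract}. Define $R_j:\overline{\mathcal{O}}_j\to\overline{\mathcal{O}}_j\setminus\Omega_j$ by $R_j=\rho_j$ on $\overline{\Omega}_j$ and $R_j=\mathrm{id}$ on $\overline{\mathcal{O}}_j\setminus\Omega_j$. The two pieces agree on $\partial_{K_j}\Omega_j$, so $R_j$ is continuous. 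Set $\widetilde T:=T\circ(R_1\times R_2)$ and $\widetilde S:=S\circ(R_1\times R_2)$. These are compact, since $R_1\times R_2$ is continuous and bounded-set preserving into $D$, and they coincide with $T$ and $S$ on $D$. Because $\widetilde T$ and $\widetilde S$ take exactly the values of $T$ and $S$ at points of $D$, condition (i) persists for $\widetilde S$. The key observation is that conditions (a) and (b)(ii) for component $j$ only involve points with $x_j$ on $\partial\Omega_j$ or $\partial\mathcal{O}_j$, while the other component $x_i$ ranges over all of $\overline{\mathcal{O}}_i$. Such a point is mapped by $R_1\times R_2$ to a point of $D$ with the same $j$-th coordinate, since $R_j$ fixes both boundaries. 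Hence the hypotheses hold for $\widetilde T,\widetilde S$ on the enlarged sets $U_1\times U_2$, with $U_j\in\{\Omega_j,\mathcal{O}_j\}$ and $x_i$ free in $\overline{U}_i$. This stability of the hypotheses under the extension is exactly what the retraction construction is designed to give.

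\textbf{Index computation.} Condition (a) on $\partial\Omega_j$ (case (B)) or $\partial\mathcal{O}_j$ (case (A)), with $\lambda\ge1$, rules out fixed points there. Condition (b)(ii) with $\mu=0$ rules them out on the other boundary. Hence $\widetilde T$ has no fixed points on any boundary $\partial_K(U_1\times U_2)$ and the needed indices are all defined. Write $A_j:=\mathcal{O}_j\setminus\overline{\Omega}_j$. By additivity,
\[
i_K(\widetilde T,\mathcal{O}_1\times\mathcal{O}_2)=i(A_1\times A_2)+i(\Omega_1\times\mathcal{O}_2)+i(A_1\times\Omega_2),
\]
with further splittings as needed. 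Each index on a product $U_1\times U_2$ is then evaluated with Proposition~\ref{prop_ind_sys} and Proposition~\ref{prop_ind01}. If some component satisfies a "$b$-type" condition on $\partial U_j$, the index is $0$. This uses Proposition~\ref{prop_ind_sys} combined with (a)-type on the other component, and the remark allowing the roles to be swapped. If both components satisfy (a)-type conditions on $\partial U_1$ and $\partial U_2$, the homotopy $t\widetilde T$, together with Proposition~\ref{prop_ind01}(a) applied on the product cone (note $0\in U_1\times U_2$), gives index $1$. For instance, in case (A),(A): on $\mathcal{O}_1\times\mathcal{O}_2$ the index is $1$, while on $\Omega_1\times\mathcal{O}_2$ and $\mathcal{O}_1\times\Omega_2$ it is $0$. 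Additivity then gives $i(A_1\times A_2)=1$. In case (B),(A): on $\mathcal{O}_1\times\mathcal{O}_2$ it is $0$ (from the first component), on $\Omega_1\times\mathcal{O}_2$ it is $1$, and on $\Omega_1\times\Omega_2$ it is $0$. Splitting $\mathcal{O}_1\times\mathcal{O}_2$ into $(\Omega_1\times\mathcal{O}_2)\cup(A_1\times\Omega_2)\cup(A_1\times A_2)$ and $A_1\times\mathcal{O}_2$ appropriately gives $i(A_1\times A_2)=-1$. Case (B),(B) yields $1$ similarly by inclusion–exclusion. This gives $(-1)^k$ overall. The existence property then provides a fixed point of $\widetilde T$ in $A_1\times A_2\subset D$, which is a fixed point of $T$.

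\textbf{Main obstacle.} The delicate step is justifying every index used in the inclusion–exclusion. For mixed sets such as $A_1\times\mathcal{O}_2$, the origin is not in the set, so Proposition~\ref{prop_ind_sys} (which requires $0\in U$ in its first factor) cannot be applied directly. I would handle this by computing only on products whose factors contain $0$ (i.e.\ $\Omega_j$ or $\mathcal{O}_j$) and expressing $A_1\times A_2$ via additivity as
\[
(\mathcal{O}_1\times\mathcal{O}_2)-(\Omega_1\times\mathcal{O}_2)-(\mathcal{O}_1\times\Omega_2)+(\Omega_1\times\Omega_2).
\]
Each term then has $0$ in both factors, and Proposition~\ref{prop_ind_sys} can be applied with whichever coordinate carries the $b$-type condition placed in the role of "$V$", using the remark. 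One must also check carefully that no fixed points lie on the relevant boundaries for each of the four sets, which follows from the hypotheses holding for arbitrary other components as noted above.
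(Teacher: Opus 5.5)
Your proposal is correct and follows the paper's proof. Your $R_j$ are the paper's $\theta_j$: you use the same extension of $T$ and $S$ through the retractions $\rho_j$, the same observation that the hypotheses survive because $\theta_j$ fixes both $\partial_{K_j}\Omega_j$ and $\partial_{K_j}\mathcal{O}_j$, and the same additivity computation over the four products $\mathcal{O}_1\times\mathcal{O}_2$, $\Omega_1\times\mathcal{O}_2$, $\mathcal{O}_1\times\Omega_2$, $\Omega_1\times\Omega_2$; your four-term formula is exactly the paper's two-step subtraction.

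One point to tighten. In every case exactly one of these four products has $b$-type conditions in both components: $\Omega_1\times\Omega_2$ in case (A),(A), and $\mathcal{O}_1\times\Omega_2$ in case (B),(A). There Proposition~\ref{prop_ind_sys} does not apply, and the zero index must come from Proposition~\ref{prop_ind01}(b) with $L=\widetilde S$, as the paper does. Your worked examples silently omit exactly that term.
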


\noindent
{\bf Proof.} Suppose that both $T_1$ and $T_2$ satisfy condition (A). For each $j \in \{1,2\}$, since $\overline{\Omega}_j$ is a strictly star-shaped set over $K_j$, there exists $r_j \in \mathbb{R}_+$, as given in Proposition~\ref{prop_retract} such that for a fixed $h_j \in K_j \setminus \{0\}$ the map $\rho_j \equiv \rho^{r_j}_{h_j}$ defined by (\ref{equa_retra}) is a retraction. We employ these retractions to extend the operators $T$ and $S$ (in such a way that the extensions remain compact) to the set $\overline{\mathcal{O}_1\times\mathcal{O}_2}$, as follows. Let $N=(N_1,N_2):\overline{\mathcal{O}_1\times\mathcal{O}_2}\rightarrow K$ and $L:\overline{\mathcal{O}_1\times\mathcal{O}_2}\rightarrow K$ denote de extension of $T$ and $S$, respectively, defined by
\[N(x):=T\circ\theta(x)  \text{ and } L(x):=T\circ\theta(x),\]
with $\theta:\overline{\mathcal{O}_1\times\mathcal{O}_2}\longrightarrow D$ given by $\theta(x_1,x_2)=(\theta_1(x_1),\theta_2(x_2))$ where, for each $j\in\{1,2\}$ the map $\theta_j:\overline{\mathcal{O}}_j\longrightarrow \overline{\mathcal{O}}_j\setminus \Omega_j$ has the following form
\begin{equation*}
	\theta_j(x_j) = 
	\begin{cases}
		\rho_j(x_j), & \text{if }  x_j \in \Omega_j, \\
		x_j, & \text{if } x_j\in \overline{\mathcal{O}}_j\setminus \Omega_j.
	\end{cases}
\end{equation*}

The extension $N$ satisfies in $\overline{\mathcal{O}_1\times\mathcal{O}_2}$, for each $j \in \{1,2\}$, the following conditions

\begin{enumerate}[($a^\star$)]
	\item \label{item_a} $N_jx\neq \lambda\, x_j$ for $x_j\in\partial_{K_j}\mathcal{O}_j$ and $\lambda\geq 1$,
	\item  \label{item_b}
	\begin{enumerate}[($i$)]
		\item $\inf_{x\in\overline{\mathcal{O}_1\times\mathcal{O}_2}}\|L_jx\|>0$;
		\item $x_j-N_j x \neq \mu\, L_j x$ for $ x_j\in\partial_{K_j}\Omega_j$ and $\mu \geq 0$.
	\end{enumerate}
\end{enumerate}

Let us check it for $j=1$, the other case is analogous. To verify ($a^\star$), take $x=(x_1,x_2)\in\overline{\mathcal{O}_1\times\mathcal{O}_2}$ with $x_1\in\partial_{K_1}\mathcal{O}_1$. From the definition of $N$, we have $N_1x=T_1(x_1,\rho_2(x_2))$. Observe also that $(x_1,\rho_2(x_2))\in D$ and $x_1\in\partial_{K_1}\mathcal{O}_1$. Then condition (A)-($a$) gives $N_1x\neq\lambda\, x_1$ for $\lambda\geq1$.

Condition ($b^\star$)-($i$) is an immediate consequence of
\[\inf_{x\in\overline{\mathcal{O}_1\times\mathcal{O}_2}}\|L_1x\|=\inf_{x\in D}\|S_1x\|,\]
which follows from the definition of $L$. Lastly, to check condition ($b^\star$)-($ii$), choose $x \in \overline{\mathcal{O}_1 \times \mathcal{O}_2}$ with $x_1 \in \partial_{K_1} \Omega_1$. By the definition of $N$ and condition (A)-($b$)-($ii$) it follows \[x_1-N_1(x_1,x_2)=x_1-T_1(x_1,\rho_2(x_2))\neq \mu\, S_1(x_1,\rho_2(x_2))=\mu\, L_1(x_1,x_2) \quad \text{for} \quad \mu\geq0.\]

These conditions allow us to compute the index of $N$ on $(\mathcal{O}_1\setminus\overline{\Omega}_1) \times (\mathcal{O}_2\setminus\overline{\Omega}_2)$, which coincides with that of $T$ since $T=N$ on this set. Note that ($a^\star$) holding for both $j\in\{1,2\}$ implies that $N$ on $\overline{\mathcal{O}_1\times\mathcal{O}_2}$ satisfies condition ($a$) in Proposition~\ref{prop_ind01}. Hence,
\[i_K(N,\mathcal{O}_1\times\mathcal{O}_2)=1.\]

Restricting the operator $N$ to $\overline{\mathcal{O}_1 \times \Omega_2}$, conditions ($a^\star$) for $j=1$ and ($b^\star$) for $j=2$ place $N$ under the hypotheses of Proposition \ref{prop_ind_sys}. Similarly, exchanging the roles of ($a^\star$) and ($b^\star$) yields the same situation on $\overline{\Omega_1 \times \mathcal{O}_2}$. Hence, 
\[
i_K(N, \mathcal{O}_1 \times \Omega_2) = i_K(N, \Omega_1 \times \mathcal{O}_2) = 0.
\]

From ($b^\star$), it follows that for each $j\in\{1,2\}$
	\[\inf_{x\in \overline{\Omega_1\times\Omega_2}}\|L_jx\|>0; \quad \text{and} \quad x_j-N_jx\neq\mu\, L_j x \ \text{ for } \ x_j\in\partial_{K_j}\Omega_j \ \text{ and } \ \mu\geq 0.\]
Hence, $N$ satisfies situation ($b$) in Proposition~\ref{prop_ind01}. Therefore, 
\[i_K(N,\Omega_1\times\Omega_2)=0.\]

Finally, note that $N$ has no fixed points $x \in \overline{\mathcal{O}_1\times\mathcal{O}_2}$ such that 
	$x_j \in \partial_{K_j}\Omega_j$ or $x_j \in \partial_{K_j}\mathcal{O}_j$ for $j\in\{1,2\}$.
Therefore, we compute the desired index by applying the additivity property of the fixed point index since
\[i_K(N,(\mathcal{O}_1\setminus \overline{\Omega}_1)\times \Omega_2)=i_K(N, \mathcal{O}_1\times \Omega_2)-i_K(N,\Omega_1\times\Omega_2)=0\]
and thus
\[i_K(N,(\mathcal{O}_1\setminus \overline{\Omega}_1)\times(\mathcal{O}_2\setminus \overline{\Omega}_2))=i_K(N, \mathcal{O}_1\times\mathcal{O}_2)-i_K(N,(\mathcal{O}_1\setminus \overline{\Omega}_1)\times \Omega_2)-i_K(N,\Omega_1\times\mathcal{O}_2)=1.\]
Hence, we also have $i_K\left(T,\left(\mathcal{O}_1\setminus \overline{\Omega}_1\right)\times\left(\mathcal{O}_2\setminus \overline{\Omega}_2\right)\right)$=1. The existence property yields the last claim of the theorem. Namely, the existence of a fixed point. 

The remaining cases are analogous, with the main difference being that when one component satisfies (A) and the other (B), the fixed point index changes sign. This follows from the previous computations of the index using the additivity property. Specifically, the relevant set for applying Proposition~\ref{prop_ind01} (a) is $\overline{\mathcal{O}_1 \times \Omega_2}$ if $T_1$ satisfies (A) and $T_2$ satisfies (B), whereas in the reversed situation, it is the set $\overline{\Omega_1 \times \mathcal{O}_2}$. \qed

\begin{remark}
	\label{remark_weak_cond}
	Observe that, we use condition (A)-(b) when we restrict the extended operator to the sets $\mathcal{O}_1\times\Omega_2$ and $\Omega_1\times\mathcal{O}_2$ by applying Proposition \ref{prop_ind_sys}. Also Proposition \ref{prop_ind01} (b) is used when we reduce to $\Omega_1\times\Omega_2$. In this sense condition (A)-(b)-(i), for fixed $j$, could be weakened to
	\[\inf_{x\in (\overline{\mathcal{O}}_i\setminus\Omega_i)\times \partial_{K_j}\Omega_j}\|S_j x\|>0, \quad (i\neq j),\]
	since we can work with the extension to the set $\overline{\mathcal{O}_i\times\Omega_j}$ given by $L_j(x)=S_j(\theta_i(x_i),\rho_j(x_j))$ whenever component $j$ satisfies (A)-(b), and apply Proposition \ref{prop_ind_sys}. After defining $L_j$ for both components in this way, we can consider $L=(L_1,L_2)$ on $\Omega_1\times\Omega_2$ and apply Proposition \ref{prop_ind01} (b) to the operator $T$ in this set.
\end{remark}

\begin{remark}
	\label{remark_versions}
A component under condition (A) in Theorem \ref{th_ppal} is called compressive. Likewise, if instead it fulfills condition (B), it is said to be expansive. This is motivated by the observation that such conditions weaken normed type compression or expansion conditions, respectively. Norm type conditions were introduced by Guo and can be found in \cite{guolak}. The theorem involving them is usually referred to in the literature as the Krasnosel’ski\u{\i}–Guo fixed point theorem. Indeed, let $T:\left(\overline{\mathcal{O}}_1\setminus\Omega_1\right) \times \left(\overline{\mathcal{O}}_2\setminus\Omega_2\right) \rightarrow K$ be a map such that its component $T_j$ $(j\in\{1,2\})$ satisfies in $\left(\overline{\mathcal{O}}_1\setminus\Omega_1\right) \times \left(\overline{\mathcal{O}}_2\setminus\Omega_2\right)$ the following compression Krasnosel'ski\u{\i}–Guo condition
\begin{equation}
	\label{cond_norm}
 \|T_jx\|>\|x_j\| \text{ for } x_j\in \partial_{K_j}\Omega_j \text{ and } \|T_jx\|<\|x_j\| \text{ for } x_j\in\partial_{K_j}\mathcal{O}_j.
\end{equation}
Condition \eqref{cond_norm} ensures that $T$ meets condition (A) of Theorem~\ref{th_ppal} (see \cite{guolak,LFP_JRL} for the details). Thus, Theorem \ref{th_ppal} subsumes the component-wise version of the Krasnosel’ski\u{\i}–Guo theorem \cite[Theorem 3.3]{LFP_JRL}, extending it to operators on regions bounded by strictly star-shaped sets.

 Moreover, this is not the only version covered and generalized by Theorem \ref{th_ppal}. These conditions are also weaker than those known as homotopic, which were employed in \cite{JRL} to prove the Krasnosel’ski\u{\i}–Precup fixed point theorem via the fixed point index. Specifically, $T_j$ is compressive under these homotopic conditions when there exists $h_j\in K_j \setminus\{0\}$ such that $T$ satisfies in $\left(\overline{\mathcal{O}}_1\setminus\Omega_1\right) \times \left(\overline{\mathcal{O}}_2\setminus\Omega_2\right)$: 
\begin{equation}
	\label{cond_homot}
x_j-T_jx\neq \mu\, h_j \text{ for } x_j\in\partial_{K_j}\Omega_j \text{ and }\mu \geq 0, \text{ and } T_jx\neq \lambda\, x_j \text{ for } x_j\in\partial_{K_j}\mathcal{O}_j \text{ and } \lambda\geq 1.
\end{equation}
Condition \eqref{cond_homot} clearly ensures that $T$ fulfills condition (A) of Theorem \ref{th_ppal}. In fact, its first clause reduces to (A)-(b) upon setting $S \equiv h_j$ (i.e., $Sx = h_j$ for every $x$), whereas its second clause is precisely (A)-(a).

Additionally, suppose that $T_j$ is compressive in the classical sense, as first introduced by Krasnosel’ski\u{\i} \cite{Kras} and stated as follows:
\begin{equation}
	\label{cond_clas}
	T_j x \npreceq_j x_j  \text{ for } x_j \in \partial_{K_j} \Omega_j, 
	 \text{ and } 
	T_j x \nsucceq_j x_j  \text{ for } x_j \in \partial_{K_j} \mathcal{O}_j,
\end{equation}
where $\preceq_j$ denotes the partial order induced by the cone $K_j$ on $X_j$, defined by $u\preceq_jv$ if and only if $v-u \in K_j$. It is well-known \cite{guolak} that condition~\eqref{cond_clas} is stronger than condition~\eqref{cond_homot}. Consequently, the latter type of conditions are also included by those in Theorem~\ref{th_ppal}.

It follows analogously that the corresponding expansive conditions imply (B).
\end{remark}

As Theorem \ref{th_ppal} encompasses the different types of conditions presented in the previous remark, it allows for their combination in several ways: by imposing them on different components, and by assigning, for a single component $j$, one condition on the boundary $\partial_{K_j}\Omega_j$ and another on $\partial_{K_j}\mathcal{O}_j$. In this regard, we have, for instance, the following result.

\begin{corollary}
	Suppose that $T=(T_1,T_2):\left(\overline{\mathcal{O}}_1\setminus\Omega_1\right) \times \left(\overline{\mathcal{O}}_2\setminus\Omega_2\right) \rightarrow K$ is a compact map and there exists $h_2\in K_2 \setminus \{0\}$ such that the following conditions are satisfied in $\left(\overline{\mathcal{O}}_1\setminus\Omega_1\right) \times \left(\overline{\mathcal{O}}_2\setminus\Omega_2\right)$:
	\begin{enumerate}[(A)]
		\item $\|T_1x\|>\|x_1\|$ for $x_1\in\partial_{K_1}\Omega_1$ and $T_1x\neq \lambda\, x_1$ for $x_1\in\partial_{K_1}\mathcal{O}_1$ and $\lambda\geq 1$;
		\item $T_2 x \nsucceq_2 x_2$ for $x_2\in\partial_{K_2}\Omega_2$ and $x_2-T_2 x\neq \mu\, h_2$ for $x_2\in\partial_{K_2}\mathcal{O}_2$ and $\mu\geq 0.$
	\end{enumerate}
	Then $i_K(T, (\mathcal{O}_1\setminus\overline{\Omega}_1)\times(\mathcal{O}_2\setminus\overline{\Omega}_2))=-1.$
	
	Moreover, the operator $T$ has at least a fixed point in $(\mathcal{O}_1\setminus\overline{\Omega}_1)\times(\mathcal{O}_2\setminus\overline{\Omega}_2)$.
\end{corollary}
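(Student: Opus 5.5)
The plan is to reduce the corollary to Theorem~\ref{th_ppal} by checking that component $T_1$ satisfies condition (A) and component $T_2$ satisfies condition (B). The count of (B)-components is then $k=1$, and the index is $(-1)^1=-1$. Existence of the fixed point then follows from the existence property in Proposition~\ref{prop_index}. The only real work is to build a suitable compact map $S=(S_1,S_2):D\to K$ and to convert each of the norm-type, classical and homotopic clauses into the corresponding clause of (A) or (B), as sketched in Remark~\ref{remark_versions}.

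For $j=2$ I take $S_2\equiv h_2$. Then (B)-(b)-(i) holds because $\|h_2\|>0$, and the second clause of the hypothesis on $T_2$ is literally (B)-(b)-(ii). For (B)-(a) I need $T_2x\neq\lambda x_2$ for $x_2\in\partial_{K_2}\Omega_2$ and $\lambda\ge1$. Suppose $T_2x=\lambda x_2$ with $\lambda\ge 1$. Then $T_2x-x_2=(\lambda-1)x_2\in K_2$, so $T_2x\succeq_2 x_2$, which contradicts the hypothesis.

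For $j=1$, clause (A)-(a) is given directly by the second part of the hypothesis on $T_1$. For (A)-(b) I choose $S_1$ in the standard Guo fashion: take $S_1\equiv h_1$ for any fixed $h_1\in K_1\setminus\{0\}$, so (A)-(b)-(i) is immediate. For (A)-(b)-(ii), suppose $x_1-T_1x=\mu h_1$ with $x_1\in\partial_{K_1}\Omega_1$ and $\mu\ge0$. If $\mu=0$ then $T_1x=x_1$, contradicting $\|T_1x\|>\|x_1\|$. If $\mu>0$, then $x_1=T_1x+\mu h_1$ with both terms in the cone, but this alone does not contradict $\|T_1x\|>\|x_1\|$ in a general normed cone.

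This is the main obstacle, since a norm-type condition need not imply the homotopic one with a fixed $h_1$. I would therefore not use a constant $S_1$, and instead take $S_1x:=T_1x$. This choice is compact, and from $\|T_1x\|>\|x_1\|$ on $\partial_{K_1}\Omega_1$ we get $\|T_1x\|\ge d(0,\partial_{K_1}\Omega_1)>0$ there. The clause then reads $x_1=(1+\mu)T_1x$, which gives $\|x_1\|\ge\|T_1x\|$, a contradiction.

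The remaining issue is the global infimum in (A)-(b)-(i). Remark~\ref{remark_weak_cond} allows it to be weakened to the infimum over $(\overline{\mathcal{O}}_2\setminus\Omega_2)\times\partial_{K_1}\Omega_1$. On that set the infimum is positive, since $\|S_1x\|>\|x_1\|\ge d(0,\partial_{K_1}\Omega_1)>0$. Here $d(0,\partial_{K_1}\Omega_1)$ is positive because $0\in\Omega_1$ and $\Omega_1$ is relatively open. Alternatively, the norm-type (A) argument cited from \cite{guolak,LFP_JRL} can be invoked directly. With (A) for $j=1$ and (B) for $j=2$ in hand, Theorem~\ref{th_ppal} gives the index $-1$, and the existence property gives the fixed point.
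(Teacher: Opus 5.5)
Your proposal is correct and follows the route the paper intends. The corollary is Theorem~\ref{th_ppal} with $T_1$ under (A) and $T_2$ under (B), giving $k=1$. For $j=2$ you take $S_2\equiv h_2$ as in Remark~\ref{remark_versions}, and for the norm-type clause on $\partial_{K_1}\Omega_1$ you take $S_1=T_1$ with the weakened infimum of Remark~\ref{remark_weak_cond}, exactly as the paper does in the proof of Theorem~\ref{the_funct}. You are also right that a constant $h_1$ would not work here, since the norm need not be monotone on the cone.
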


\begin{remark}
Multiplicity results can be obtained by suitably adapting \cite[Theorem 3.4]{LFP_JRL}, leading to a multiplicity result that ensures an odd number of coexistence fixed points located in more general domains.
\end{remark}

\section{Strictly star-shaped sets defined by functionals}

Now, we give sufficient conditions in order to define a strictly star-shaped set over a cone by means of a functional. More precisely, we determine conditions over certain functional $\varphi$ to guarantee that $\overline{\Omega}$ is a strictly star-shaped set in the cone $K$, being $\Omega$ a set of the form
\begin{equation}
	\label{star_funct}
\Omega^{\varphi}_r=\left\{x\in K:\varphi(x)<r \right\},
\end{equation} 
with $r\in\mathbb{R}_+$ and $\varphi:K\rightarrow [0,+\infty)$.

\begin{proposition}
	\label{prop_cond_star}
	Let $K$ be a cone in a normed linear space $X$, $r\in\mathbb{R}_+$, $\varphi:K\rightarrow[0,+\infty)$ a continuous functional and $\Omega^{\varphi}_r$ as given by \eqref{star_funct} .
	
	Then $\overline{\Omega}^{\varphi}_r$ is a strictly star-shaped set provided that the following two conditions hold	
	\begin{enumerate}
		\item[$(C_1)$] there exists $c>0$ such that $c\left\|x\right\|\leq \varphi(x)$ for all $x\in K$;
		\item[$(C_2)$] $\varphi(\lambda\,x)\leq \lambda\,\varphi(x)$ for all $\lambda\in[0,1)$ and all $x\in K$.
	\end{enumerate}
\end{proposition}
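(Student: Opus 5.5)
We must check that $\Omega:=\Omega^{\varphi}_r$ is nonempty, relatively open, bounded in $K$, and that $\lambda x\in\Omega$ for every $x\in\partial_K\Omega$ and every $\lambda\in[0,1)$.

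\textbf{Step 1: the routine properties.}
\begin{itemize}
\item Openness: $\Omega$ is relatively open in $K$ because it is the preimage of $(-\infty,r)$ under the continuous functional $\varphi$.
\item Boundedness: by $(C_1)$, $x\in\Omega$ gives $c\|x\|\le\varphi(x)<r$, so $\Omega\subset\{x\in K:\|x\|<r/c\}$.
\item Nonemptiness: taking $\lambda=0$ in $(C_2)$ gives $\varphi(0)\le 0$. Since $\varphi\ge0$, we get $\varphi(0)=0<r$, so $0\in\Omega$.
\end{itemize}

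\textbf{Step 2: locate the boundary.} Let $x\in\partial_K\Omega$. Then $x\in\overline{\Omega}$, the relative closure of $\Omega$ in $K$ (recall $K$ is closed). By continuity, $\varphi(x)\le r$. Also $x\notin\Omega$, because $\Omega$ is relatively open. Hence $\varphi(x)=r$.

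\textbf{Step 3: the segment lies in $\Omega$.} Take $x\in\partial_K\Omega$ and $\lambda\in[0,1)$. Since $K$ is a cone, $\lambda x\in K$. By $(C_2)$ and Step 2,
\[
\varphi(\lambda x)\le\lambda\varphi(x)=\lambda r<r,
\]
so $\lambda x\in\Omega$. This is exactly the defining property of a strictly star-shaped set with $p=0$ over $K$.

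\textbf{Main obstacle.} The only delicate point is Step 2: we need $\partial_K\Omega\subset\{\varphi=r\}$. The inclusion $\partial_K\Omega\subset\{\varphi\ge r\}$ uses openness of $\Omega$, and $\partial_K\Omega\subset\{\varphi\le r\}$ uses continuity of $\varphi$ on the closed set $K$. Everything else follows immediately from $(C_1)$ and $(C_2)$.
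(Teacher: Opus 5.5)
Your proposal is correct and follows the same approach as the paper's proof: openness from continuity of $\varphi$, boundedness from $(C_1)$, $0\in\Omega^{\varphi}_r$ from $(C_2)$ with $\lambda=0$, and then $\varphi(\lambda x)\le\lambda r<r$ for boundary points. The one difference is that you explicitly justify $\varphi(x)=r$ on $\partial_K\Omega^{\varphi}_r$, which the paper asserts without argument.
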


\noindent
{\bf Proof.} First, the continuity of the functional $\varphi$ guarantees that $\Omega^\varphi_r:=\varphi^{-1}([0,r))$ is a relatively open set. Second, condition $(C_1)$ implies that $\Omega^\varphi_r$ is a bounded set. Indeed, if $x\in \Omega^\varphi_r$, then $x\in K$ and $\varphi(x)<r$ so, due to $(C_1)$, one has $\left\|x\right\|<r/c$. 

On the other hand, condition $(C_2)$ ensures that $\varphi(0)=0$ and thus $0\in\Omega^\varphi_r$. 
Moreover, if $x\in\partial_K\,\Omega^\varphi_r$, then $\varphi(x)=r$ and thus
\[\varphi(\lambda\,x)\leq\lambda\,\varphi(x)=\lambda\,r<r \quad \text{for all } \lambda\in[0,1), \]
that is, $\lambda\,x\in \Omega^\varphi_r$ for all $\lambda\in[0,1)$. In conclusion, $\overline{\Omega}^\varphi_r$ is a strictly star-shaped set. 
\qed

\begin{remark}
	Note that if a functional $\varphi$ satisfies condition $(C_2)$ above, then it is called a \textit{subhomogeneous functional} following the terminology in \cite{AHL}. 
\end{remark}

\begin{example}
	\label{ex_funct}
	Consider the Banach space of continuous functions $X=\mathcal{C}([0,1])$ endowed with the usual maximum norm $\left\|x\right\|_{\infty}:=\max_{t\in[0,1]}\left|x(t)\right|$. 
	
	It is direct to verify that the following mappings satisfy conditions $(C_1)$ and $(C_2)$ above:
	\begin{enumerate}
		\item[$(i)$] $\varphi:K\rightarrow[0,\infty)$ defined as
		\[\varphi(x)=\alpha\,\min_{t\in[a,b]}\left|x(t)\right|+\beta\,\left\|x\right\|_{\infty},  \]
		with $\alpha,\beta\geq 0$, $\beta\neq 0$, $[a,b]\subset[0,1]$ and being $K$ any cone in $X$.
		\item[$(ii)$] $\varphi:K\rightarrow[0,\infty)$ given by
		\[\varphi(x)=\min_{t\in[a,b]}x(t),  \]
		with $[a,b]\subset[0,1]$ and $K:=\left\{x\in X:\min_{t\in[a,b]}x(t)\geq c\left\|x\right\|_{\infty} \right\}$ for some fixed $c>0$. 
		\item[$(iii)$] $\varphi:K\rightarrow[0,\infty)$ expressed as
		\[\varphi(x)=\max_{t\in[a,b]}x(t),  \]
		with $[a,b]\subset[0,1]$ and $K:=\left\{x\in X:\min_{t\in[a,b]}x(t)\geq c\left\|x\right\|_{\infty} \right\}$ for some fixed $c>0$. 
	\end{enumerate}	
	Observe that the functionals in $(i)$ and $(iii)$ are not concave and so they are not between those considered in \cite[Section 2.2]{JRL}. In addition, the functional given in $(ii)$ does not satisfy condition $(F_5)$ in \cite[Theorem 3.21]{LR} and thus it cannot employed to define an admissible set in the sense of \cite{LR}. 
\end{example}

To illustrate the usefulness of our generalization, we provide an example of an operator whose fixed point can be detected by Theorem~\ref{th_ppal} under norm conditions on star-shaped sets of the form $\overline{\Omega}^\varphi_r$, but not by means of the previous results in annular shells~\cite{LFP_JRL}.

\begin{example}
Let $\psi:\mathbb{R}\rightarrow(-1,1)$ be a continuous function satisfying $\psi(0)=1$, as well as $\psi(t)\geq 0$, when $|t|\leq 1$, and $\psi(t)<0$, when $|t|>1$.

For the cone $P=\{z:=(x,y)\in\mathbb{R}^2: x\geq 0,\, y\geq 0\}$ in $(\mathbb{R}^2, \|\cdot\|_2)$ ( $\|(x,y)\|_2=\sqrt{x^2+y^2}$ for $(x,y)\in\mathbb{R}^2$), consider the operator $T:P\times P\rightarrow P\times P$ given by $T(z_1,z_2)=(\lambda(p(z_1))z_1, \lambda(p(z_2))z_2),$ where $p:\mathbb{R}^2\rightarrow[0,\infty)\times[0,2\pi)$ denotes the polar coordinates transformation $p(x,y)=(r(x,y), \theta(x,y))$ and $\lambda:[0,\infty)\times[0,\pi/2]\rightarrow \mathbb{R}$ is expressed as
\[\lambda(r,\theta)=1+\psi\left(\frac{r-\varphi(\theta)}{\varepsilon}\right),\]
with $\varepsilon\in\mathbb{R}_+$ fixed and $\varphi(\theta)=\frac{1}{\cos\theta+\sin\theta}$.

Let us define $\Omega^\phi_1=\{(x,y)\in P: \phi(x,y)=x+y< 1\}$. As $\phi:P\rightarrow\mathbb{R}$ is under conditions \emph{($C_1$)} and \emph{($C_2$)} in Proposition \ref{prop_cond_star}, $\overline{\Omega}^\phi_1$ is a strictly star-shaped. Observe that $\partial_P\Omega^\phi_1$ can be parameterized as
\[\theta\in[0,\pi/2]\rightarrow \varphi(\theta)(\cos\theta, \sin\theta).\]

For both $j\in\{1,2\}$, we have
\[\|T_jz\|_2>\|z_j\|_2 \text{ for all } z\in P\times P \text{ with } \phi(z_j)=1.\]
Indeed, clearly for $z\in P\times P$ with $\phi(z_j)=1$ we have $T_jz=\lambda(z_j)z_j=(1+\psi(0))z_j=2z_j$. Therefore $\|T_jz\|_2=\|2z_j\|_2=2\|z_j\|_2>\|z_j\|_2$.

Suppose now $\varepsilon<\frac{1-1/\sqrt{2}}{2}$. Then, for all $R\in\mathbb{R}_+$ there exists $\tilde{z}\in P\times P$ with $\|\tilde{z}_j\|_2=R$ and $\|T_j\tilde{z}\|_2\leq \|\tilde{z}_j\|_2$. Clearly, for all $z\in P\times P$ such that $\left\|z_j\right\|=R$ we have $r(z_j)=R$. Furthermore, from the condition over $\varepsilon$ and $\varphi([0,\pi/2])=[1/\sqrt{2},1]$ there exists and angle $\tilde{\theta}\in[0,\pi/2]$ such that $|R-\varphi(\tilde{\theta})|>\varepsilon$. Thus $\lambda(R,\tilde{\theta})<1$. Choosing $\tilde{z}$ such that $p(\tilde{z}_j)=(R,\tilde{\theta})$ it is clear that $\|T_j\tilde{z}\|_2< \|\tilde{z}_j\|_2$. 

Additionally, by taking $\varepsilon=1/10$, we have that 
\[\|T_j z\|_2<\|z_j\|_2 \text{ for all } z\in P\times P \text{ with } \|z_j\|_2=2,\]
as a consequence of having $|2-\varphi(\theta)|>1/10$ for all $\theta\in[0,\pi/2]$.

Note that we have then constructed an example of an operator which, when restricted to the set 
$\left(\overline{P}_{2}\setminus \Omega^\phi_1\right) \times \left(\overline{P}_{2}\setminus \Omega^\phi_1\right)$, 
satisfies the hypothesis of Theorem~\ref{th_ppal} (using the corresponding normed type conditions). 
Nonetheless, the operator never meets the normed type conditions on a set of the form 
$\left(\overline{P}_R \setminus P_r\right) \times \left(\overline{P}_R \setminus P_r\right)$. That is, the fixed point cannot be obtained from the results in \cite{LFP_JRL}.
\end{example}

Note that in the previous example, \(\phi\) coincides with the norm \(\|\cdot\|_1\) in \(\mathbb{R}^2\) over the cone \(P\), since 
\(\|(x,y)\|_1 = |x| + |y|\) for all \((x,y) \in \mathbb{R}^2\). In particular, norms are examples of functionals satisfying the conditions in Proposition~\ref{prop_cond_star}. 

Clearly, Theorem~\ref{th_ppal} allows us to establish a result in two different norms in the following sense.

\begin{remark}
 For a fixed $j \in \{1,2\}$ let $|\cdot|_j$ be another norm of $(X_j,\|\cdot\|_j)$. Assume that there exists $c\in\mathbb{R}_+$ such that $ \|x\|_j\leq c|x|_j$ for all $x\in K_j$. Then for  $r_j, R_j \in \mathbb{R}_+$ with $c\, r_j<R_j$ it is clear that $\overline{\Omega}^{|\cdot|_j}_{r_j}\subset\Omega^{\|\cdot\|_j}_{R_j}$.
 
 Observe that the compressive normed type condition in Theorem \ref{th_ppal} for component $j$, can be written as
 \[\|T_j x\|>\|x_j\| \text{ for } |x_j|=r_j \text{ and } \|T_j x\|<\|x_j\| \text{ for } \|x_j\|=R_j.\]
In fact, we can go further and impose the condition on the operator in terms of both norms, that is,
\begin{equation}
	\label{cond_two_norm}
	|T_j x| > |x_j| \text{ for } |x_j| = r_j 
	\quad \text{and} \quad 
	\|T_j x\| < \|x_j\| \text{ for } \|x_j\| = R_j.
\end{equation}
Under this compressive condition \eqref{cond_two_norm} for both components, the operator \(T\) satisfies the assumptions of Theorem~\ref{th_ppal}(A) for each \(j \in \{1,2\}\), in the set $\left(\overline{\Omega}^{\|\cdot\|_1}_{R_1}\setminus\Omega^{|\cdot|_1}_{r_1}\right) \times \left(\overline{\Omega}^{\|\cdot\|_2}_{R_2}\setminus\Omega^{|\cdot|_2}_{r_2}\right)$. 

The arguments are similar to those discussed in Remark~\ref{remark_versions} for the classical normed version. It is also straightforward to formulate the expansion condition for component \(j\) and verify that \(T\) satisfies (B). 

Hence, we obtain an extension to the setting of systems of the compression–expansion fixed point theorem in two norms stated by O'Regan and Precup in~\cite{PrecupMAA}. Moreover, this result not only guarantees the existence of a fixed point (as does \cite{PrecupFPT}) but also provides the value of the fixed point index of the operator.
\end{remark}

When strictly star-shaped sets are defined by functionals, conditions on the operators can be formulated in terms of them as well as shown in the following result inspired by \cite[Theorem 8]{fel}.

\begin{theorem}
	\label{the_funct}
	For $j\in\{1,2\}$, let $r_j,R_j\in\mathbb{R}_+$ be such that $r_j<R_j$ and $\varphi_j, \psi_j:K_j\rightarrow [0,+\infty)$ be continuous functionals under conditions $(C_1)$ and $(C_2)$. 
	Assume that $T:\left(\overline{\Omega}^{\psi_1}_{R_1}\setminus\Omega^{\varphi_1}_{r_1}\right)\times\left(\overline{\Omega}^{\psi_2}_{R_2}\setminus\Omega^{\varphi_2}_{r_2}\right)\rightarrow K$ is a compact map such that, for $j\in\{1,2\}$, one of the following conditions holds
	\begin{enumerate}[(A)]
		\item $\varphi_j(T_jx)>r_j$ if $\varphi_j(x_j)=r_j$ and $\psi_j(T_jx)<R_j$ if $\psi_j(x_j)=R_j$;
		\item $\varphi_j(T_jx)<r_j$ if $\varphi_j(x_j)=r_j$ and $\psi_j(T_jx)>R_j$ if $\psi_j(x_j)=R_j$.
	\end{enumerate}
	Then $i_K\left(T,\left({\Omega}^{\psi_1}_{R_1}\setminus\overline{\Omega}^{\varphi_1}_{r_1}\right)\times\left({\Omega}^{\psi_2}_{R_2}\setminus\overline{\Omega}^{\varphi_2}_{r_2}\right)\right)=(-1)^k$, where $k\in\{0,1,2\}$ is a counter of the number of indexes $j$ for which condition (B) is satisfied.
	
	In particular, the operator $T$ has at least one fixed point in $\left({\Omega}^{\psi_1}_{R_1}\setminus\overline{\Omega}^{\varphi_1}_{r_1}\right)\times\left({\Omega}^{\psi_2}_{R_2}\setminus\overline{\Omega}^{\varphi_2}_{r_2}\right)$.
\end{theorem}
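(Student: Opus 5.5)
We will derive Theorem~\ref{the_funct} directly from Theorem~\ref{th_ppal}, with $\Omega_j:=\Omega^{\varphi_j}_{r_j}$ and $\mathcal{O}_j:=\Omega^{\psi_j}_{R_j}$. The work is to check the structural hypotheses (I)--(II) and then to show that the functional conditions (A), (B) of Theorem~\ref{the_funct} imply the corresponding conditions of Theorem~\ref{th_ppal}.

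\textbf{Structural hypotheses.} By Proposition~\ref{prop_cond_star}, both $\Omega_j$ and $\mathcal{O}_j$ are relatively open and bounded, $0\in\Omega_j$, and $\overline{\Omega}_j$ is strictly star-shaped. In particular (II) holds.

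For (I) we need $\overline{\Omega}_j\subset\mathcal{O}_j$, which is implicit in the statement since the domain is written as a product of such differences. I will use the continuity of the functionals to identify boundaries: $\partial_{K_j}\Omega_j\subset\{\varphi_j=r_j\}$ and $\partial_{K_j}\mathcal{O}_j\subset\{\psi_j=R_j\}$. The second inclusion follows because points with $\psi_j<R_j$ lie in the open set $\mathcal{O}_j$, and points with $\psi_j>R_j$ are not in its closure.

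If the inclusion $\overline{\Omega}_j\subset\mathcal{O}_j$ is not to be assumed, I expect it can be deduced from the hypotheses via the sets' nesting. I would state it as the standing interpretation of the domain rather than try to force it.

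\textbf{Compressive case (A).} I take $S_j\equiv h_j$ constant, with $h_j\in K_j\setminus\{0\}$, so $\inf\|S_j x\|=\|h_j\|>0$.

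For (A)-(a): suppose $T_jx=\lambda x_j$ with $\lambda\ge1$ and $\psi_j(x_j)=R_j$. Then (C$_2$), applied with $1/\lambda\in(0,1]$, gives $\psi_j(x_j)=\psi_j(\lambda^{-1}T_jx)\le\lambda^{-1}\psi_j(T_jx)\le\psi_j(T_jx)<R_j$. This is a contradiction; when $\lambda=1$ the same contradiction comes directly from $\psi_j(T_jx)<R_j=\psi_j(x_j)$.

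For (A)-(b)-(ii): suppose $x_j-T_jx=\mu h_j$ with $\mu\ge0$ and $\varphi_j(x_j)=r_j$. Then $T_jx=x_j-\mu h_j$, and this vector need not be comparable to $x_j$ through scaling, so the subhomogeneity of $\varphi_j$ gives no control over it. This is the main obstacle: constant $S_j$ does not interact well with a merely subhomogeneous functional.

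The remedy is to choose $S_j$ adapted to the condition, namely $S_jx:=T_jx$ restricted where needed, or more robustly $S_jx:=T_jx+h_j$ normalized. Concretely, I would try $S_j x := T_jx$ and use Remark~\ref{remark_weak_cond}, which only requires $\inf\|S_jx\|>0$ for $x_j\in\partial_{K_j}\Omega_j$. There, $\varphi_j(T_jx)>r_j$ together with (C$_1$)-type continuity considerations (the set $\{\varphi_j\le r_j\}$ contains a ball around $0$ in $K_j$) give $T_jx\ne0$. Compactness of $T$ then upgrades this to a positive infimum: if $T_jx_n\to0$, then by continuity $\varphi_j(T_jx_n)\to0$, contradicting $\varphi_j(T_jx_n)>r_j$.

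With this choice, $x_j-T_jx=\mu T_jx$ means $x_j=(1+\mu)T_jx$. Then (C$_2$) gives $r_j=\varphi_j(x_j)\ge\ldots$; more precisely, $\varphi_j(T_jx)=\varphi_j((1+\mu)^{-1}x_j)\le(1+\mu)^{-1}r_j\le r_j$, contradicting $\varphi_j(T_jx)>r_j$.

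\textbf{Expansive case (B).} This is symmetric. Condition (B)-(a) on $\partial\Omega_j$ is handled with $\varphi_j$ exactly as in the argument for (A)-(a). Condition (B)-(b) on $\partial\mathcal{O}_j$ uses $S_j=T_j$, with $\psi_j(T_jx)>R_j$ providing both the lower bound on $\|S_jx\|$ and the contradiction.

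Applying Theorem~\ref{th_ppal}, in its weakened form from Remark~\ref{remark_weak_cond}, then yields the index $(-1)^k$ and the existence of a fixed point.
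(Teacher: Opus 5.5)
Your proposal is correct and is essentially the paper's proof. You apply Theorem~\ref{th_ppal} with $\Omega_j=\Omega^{\varphi_j}_{r_j}$ and $\mathcal{O}_j=\Omega^{\psi_j}_{R_j}$, get the $(a)$-conditions and (with $S_j=T_j$) the $(b)$-(ii) conditions from $(C_2)$, and get the boundary bound $\inf\|T_jx\|>0$ needed for Remark~\ref{remark_weak_cond} from continuity of the functional at $0$; your sequence argument for that bound is tighter than the paper's.

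Two caveats are shared with the paper rather than being your errors. First, $\overline{\Omega}^{\varphi_j}_{r_j}\subset\Omega^{\psi_j}_{R_j}$ cannot be deduced from $r_j<R_j$, contrary to your guess. For example, $\varphi_j=\psi_j/10$ with $R_j<10r_j$ makes the domain empty, so this inclusion must be an explicit standing assumption. Second, in case (B) the remark, which is stated only for (A)-(b), must be used with a retraction of $\overline{\Omega}^{\psi_j}_{R_j}$ onto $\partial_{K_j}\Omega^{\psi_j}_{R_j}$. Such a retraction exists by Proposition~\ref{prop_retract}, since that set is also strictly star-shaped.
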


\noindent
{\bf Proof.} Suppose, for instance, that for both $j\in\{1,2\}$ condition (A) holds. Then $T$ is under the same condition in Theorem \ref{th_ppal}. In fact, suppose that we do not have for $x\in\left(\overline{\Omega}^{\psi_1}_{R_1}\setminus\Omega^{\varphi_1}_{r_1}\right)\times\left(\overline{\Omega}^{\psi_2}_{R_2}\setminus\Omega^{\varphi_2}_{r_2}\right)$ that
\[T_jx\neq\lambda\, x_j \text{ for } \psi_j(x_j)=R_j \text{ and } \lambda\geq 1.\]
Then there exists \( x \in \left(\overline{\Omega}^{\psi_1}_{R_1}\setminus\Omega^{\varphi_1}_{r_1}\right)\times\left(\overline{\Omega}^{\psi_2}_{R_2}\setminus\Omega^{\varphi_2}_{r_2}\right) \) with \( \psi_j(x_j) = R_j \) and \( \lambda \ge 1 \) such that \( T_j x = \lambda\, x_j \). 
By condition \( (C_2) \) for \( \psi_j \), we have
\[
R_j = \psi_j(x_j) = \psi_j\!\left(\frac{1}{\lambda} T_j x\right) 
\le \frac{1}{\lambda}\psi_j(T_j x).
\]
Hence \( \psi_j(T_j x) \ge \lambda R_j \ge R_j \), 
which contradicts the hypothesis.

Similarly, we have that the operator satisfies for $x\in\left(\overline{\Omega}^{\psi_1}_{R_1}\setminus\Omega^{\varphi_1}_{r_1}\right)\times\left(\overline{\Omega}^{\psi_2}_{R_2}\setminus\Omega^{\varphi_2}_{r_2}\right)$ that
\[x_j - T_j x \neq \mu\, T_j x 
\quad \text{for } \varphi_j(x_j) = r_j \text{ and } \mu \ge 0.\]
Otherwise, there would exist \( x \in \left(\overline{\Omega}^{\psi_1}_{R_1}\setminus\Omega^{\varphi_1}_{r_1}\right)\times\left(\overline{\Omega}^{\psi_2}_{R_2}\setminus\Omega^{\varphi_2}_{r_2}\right) \) with 
\( \varphi_j(x_j) = r_j \) and \( \mu \ge 0 \) such that 
\( x_j = (1+\mu)\, T_j x \). 
Again, by condition \( (C_2) \) for \( \varphi_j \), we obtain
\[ r_j < \varphi_j(T_j x) 
= \varphi_j\!\left( \frac{1}{1+\mu} x_j \right) 
\le \frac{1}{1+\mu} \varphi_j(x_j) 
= \frac{r_j}{1+\mu},\]
a contradiction.

Finally, it is clear that 
\[ \inf_{x_i\in \overline{\Omega}^{\psi_i}_{R_i}\setminus\Omega^{\varphi_i}_{r_i},\, x_j\in \partial_{K_j}\Omega^{\varphi_j}_{r_j}} \|T_j x\| > 0 
\quad (j \ne i). \]
Indeed, if this were not the case, there would exist $x=(x_1,x_2)$ such that
\( x_i\in \overline{\Omega}^{\psi_i}_{R_i}\setminus\Omega^{\varphi_i}_{r_i},\, x_j\in \partial_{K_j}\Omega^{\varphi_j}_{r_j} \) 
with \( \|T_j x\| = 0 \). 
Then necessarily \( T_j x = 0 \), and hence \( \varphi_j(T_j x) = 0 \) by condition \( (C_2) \), 
a contradiction. The conclusion holds in light of the Remark \ref{remark_weak_cond}. 
%{\color{red} To check (A)-$(b)$ in Theorem~\ref{th_ppal} it suffices to define $S_j$ as the extension of $T_j$ provided by the retraction from $\overline{\Omega}^{\varphi_j}_{r_j}$ into $\partial_{K_j}\Omega^{\varphi_j}_{r_j}$.
%}
\qed

\section{Applications}

\subsection{Hammerstein systems}
Consider a Hammerstein-type system of equations of the form
\begin{equation}
	\label{hammer_sys}
	\begin{cases}
		x_1(t) = \displaystyle \int_0^1 G_1(t,s) f_1(s, x_1(s), x_2(s))\, ds, & t \in [0,1],\\[1.2ex]
		x_2(t) = \displaystyle \int_0^1 G_2(t,s) f_2(s, x_1(s), x_2(s))\, ds, & t \in [0,1],
	\end{cases}
\end{equation}
where for each $j \in \{1,2\}$ the following conditions are satisfied:
\begin{enumerate}
	\item[$(\mathcal{H}_1)$] The kernel $G_j: I^2 \to \mathbb{R}^+$ is continuous ($I:=[0,1]$ and $\mathbb{R}^+:=[0,\infty)$).
	\item[$(\mathcal{H}_2)$] There exists a close interval $J \subset I$, a function $\Phi_j:I \to \mathbb{R}^+$ such that 
	\[
	\Phi_j \in L^1(0,1), \qquad \int_J \Phi_j(s)\, ds > 0,
	\]
	and a constant $c_j \in (0,1)$ satisfying
	\[
	G_j(t,s) \le \Phi_j(s), \quad \text{for all } t,s \in I,
	\]
	\[
	c_j \Phi_j(s) \le G_j(t,s), \quad \text{for all } t \in J,\ s \in I.
	\]
	
	\item[$(\mathcal{H}_3)$] The function $f_j :I \times \mathbb{R}^+ \times \mathbb{R}^+ \to \mathbb{R}^+$ is continuous.
\end{enumerate}

For $j\in\{1,2\}$, let us introduce the cone $P_j=\{x\in\mathcal{C}(I): x(t)\geq 0 \text{ for } t\in I\},$ of the Banach space of continuous functions $\mathcal{C}(I)$ with the maximum norm $\|\cdot\|_\infty$.

To establish the existence of a solution to the integral system \eqref{hammer_sys}, we apply our fixed point results to the operator $T : K \to K, \, T = (T_1, T_2),$ defined by
\begin{equation}
	\label{oper}
	T_j(x_1, x_2)(t) := \int_0^1 G_j(t,s) f_j(s, x_1(s), x_2(s))\, ds, 
	\quad t \in I, \quad  (j = 1,2),
\end{equation}
with $K:= K_1 \times K_2$, where for $j\in\{1,2\}$, $K_j$ is the cone given by
\[
K_j = \left\{ x \in P_j: x(t) \ge c_j \|x\|_\infty, \text{ for } t\in J \right\}.
\]

By standard arguments (see \cite{Figue_tojo,web}), under assumptions $(\mathcal{H}_1)-(\mathcal{H}_3)$ one can show that the operator $T$ leaves the cone $K$ invariant and is compact, that is, $T$ is continuous and maps bounded sets into relatively compact ones.

Henceforth, we use the following notation:
\[d_j:=\left(\max_{t\in[0,1]}\int_0^1 G_j(t,s)ds\right)^{-1}, \quad D_j:=\left(\min_{t \in J}\int_J G_j(t,s)ds\right)^{-1}, \quad (j=1,2),\]
and 
\[S_j:=\max_{t\in[0,1]}\int_J G_j(t,s)ds, \quad S^c_j:=\max_{t\in[0,1]}\int_{J^c} G_j(t,s)ds,\quad (j=1,2),\]
where $J^c=I\setminus J$.

Additionally, for each $j\in\{1,2\}$, let $\varphi_j:K_j\rightarrow\mathbb{R}^+$ be the functional defined by
\begin{equation}\label{eq_funct}
\varphi_j(x)=\min_{t \in J} x(t),
\end{equation}
and denote
\[m_j^{r,R}(s):=\min\left\{f_j(s,x_1,x_2): r_j\leq x_j\leq \frac{r_j}{c_j}, \, r_i\leq x_i\leq R_i\, (i\neq j)\right\},\]
\[M^{R}_j(s):=\max\{f_j(s,x_1,x_2): 0\leq x_j\leq  R_j,\, 0\leq x_i\leq R_i\,(i\neq j)\},\]
\[M^{r,R}_j(s):=\max\{f_j(s,x_1,x_2): c_jR_j\leq x_j\leq R_j, \, r_i\leq x_i\leq R_i\, (i\neq j)\},\]
for each $s\in I$.

Note that for $0<r_j<c_jR_j$ we have that $\overline{\Omega}_{r_j}^{\varphi_j}\subset (K_j)_{R_j}$, being $\overline{\Omega}_{r_j}^{\varphi_j}$ the strictly star-shaped set over $K_j$ defined as in \eqref{star_funct} by means of the functional $\varphi_j$ in \eqref{eq_funct} and $(K_j)_{R_j}:=\{x_j\in K_j: \|x_j\|<R_j\}$.

At this point, we are in position to establish an existence result for the system (\ref{hammer_sys}) as a consequence of our Theorem \ref{th_ppal}.

\begin{theorem}
	\label{th_loc1}
Let hypotheses $(\mathcal{H}_1)-(\mathcal{H}_3)$ be satisfied. Suppose that there exists $r_j,R_j,A_j,B_j\in\mathbb{R}_+$, $j=1,2$, such that $r_j<c_jR_j$, $B_j\leq d_j\leq A_j$ and $B_j S_j+A_j S^c_j<1$. Assume further that, for each $j\in\{1,2\}$, the following conditions hold:
\begin{enumerate}[$(A)$]
	\item $0\leq M^{R}_j(s)\leq A_j R_j$ for $s\in J^c$ and $0\leq M_j^{r,R}(s)\leq B_jR_j$ for $s\in J$;
	\item $m_j^{r,R}(s)> D_jr_j$ for $s\in J$.
\end{enumerate}

Then the system \eqref{hammer_sys} has at least one positive solution $(x_1,x_2)\in K$ such that $r_j<\varphi_j(x_j)$ and $\|x_j\|_{\infty}<R_j, \, (j=1,2)$.
\end{theorem}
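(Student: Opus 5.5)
We apply Theorem \ref{th_ppal} with both components under condition (A), taking $\Omega_j=\Omega^{\varphi_j}_{r_j}=\{x\in K_j:\min_{t\in J}x(t)<r_j\}$ and $\mathcal{O}_j=(K_j)_{R_j}$. On $K_j$ the functional $\varphi_j$ satisfies $(C_1)$ with constant $c_j$ and is positively homogeneous, so $(C_2)$ also holds. Hence Proposition \ref{prop_cond_star} gives that $\overline{\Omega}_j$ is strictly star-shaped over $K_j$. Moreover $r_j<c_jR_j$ gives $\overline{\Omega}_j\subset\mathcal{O}_j$, and clearly $0\in\Omega_j$, so (I)--(II) hold. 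The operator $T$ in \eqref{oper} is compact and maps $K$ into $K$. For the auxiliary map we take $S_j\equiv \mathbf{1}$, the constant function $1$, which lies in $K_j$. With this choice (A)-(b)-(i) is trivial, and (A)-(b)-(ii) reduces to the homotopic condition $x_j-T_jx\neq\mu\,\mathbf 1$, as in Remark \ref{remark_versions}. A fixed point in $(\mathcal{O}_1\setminus\overline{\Omega}_1)\times(\mathcal{O}_2\setminus\overline{\Omega}_2)$ is exactly a solution with $r_j<\varphi_j(x_j)$ and $\|x_j\|_\infty<R_j$.

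\emph{Step 1: (A)-(a) on $\|x_j\|_\infty=R_j$.} Take $x\in D$ with $\|x_j\|_\infty=R_j$. Then $c_jR_j\le x_j(s)\le R_j$ for $s\in J$ and $0\le x_j\le R_j$ everywhere. For $i\ne j$ we have $x_i\in\overline{\mathcal{O}}_i\setminus\Omega_i$, so $\|x_i\|_\infty\le R_i$ and $x_i(s)\ge r_i$ on $J$. Thus $f_j(s,x(s))\le M_j^{r,R}(s)\le B_jR_j$ for $s\in J$ and $f_j(s,x(s))\le M_j^R(s)\le A_jR_j$ for $s\in J^c$. Splitting the integral gives
\[
T_jx(t)\le B_jR_j\int_J G_j(t,s)\,ds+A_jR_j\int_{J^c}G_j(t,s)\,ds\le (B_jS_j+A_jS_j^c)R_j<R_j .
\]
Hence $\|T_jx\|_\infty<\|x_j\|_\infty$, so $T_jx\ne\lambda x_j$ for $\lambda\ge1$.

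\emph{Step 2: (A)-(b)-(ii) on $\varphi_j(x_j)=r_j$.} Now $x_j\in K_j$ with $\min_J x_j=r_j$, which gives $r_j\le x_j(s)\le \|x_j\|_\infty\le r_j/c_j$ on $J$. The other component satisfies $r_i\le x_i\le R_i$ on $J$, so $f_j(s,x(s))\ge m_j^{r,R}(s)>D_jr_j$ on $J$. For $t\in J$, using $G_j\ge0$ and the compactness of $J$ (so that the strict inequality persists after integrating, since $m_j^{r,R}$ is continuous, or at least the integrand is positive on a set of positive measure),
\[
T_jx(t)\ge\int_J G_j(t,s)f_j(s,x(s))\,ds> D_jr_j\min_{t\in J}\int_JG_j(t,s)\,ds=r_j .
\]
Thus $\min_J T_jx>r_j$. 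If $x_j-T_jx=\mu\mathbf 1$ with $\mu\ge0$, then $x_j\ge T_jx$, so $r_j=\min_Jx_j\ge\min_J T_jx>r_j$, a contradiction.

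Theorem \ref{th_ppal} then gives index $1$ and a fixed point in the desired region. The main care point is Step 2, which needs the strictness of the bound and the correct bounds for $x_j$ on $J$ coming from the cone definition. A secondary point is in Step 1: the bounds for $x_i$ on $J$ come from $x_i\notin\Omega_i$, which yields the lower bound $r_i$ there. Off $J$ we only know $0\le x_i\le R_i$, which is exactly why $M_j^R$ is used on $J^c$. The hypothesis $B_j\le d_j\le A_j$ is not needed for this argument beyond consistency.
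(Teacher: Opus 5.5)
Your proposal is correct and follows essentially the same route as the paper. Condition (A)-(a) comes from the estimate $\|T_jx\|_\infty\le (B_jS_j+A_jS_j^c)R_j<R_j$ on $\|x_j\|_\infty=R_j$. Condition (A)-(b) comes from the homotopic condition with $S_j\equiv h_j\equiv 1$, where $m_j^{r,R}>D_jr_j$ on $J$ forces $T_jx(t)>r_j$ for $t\in J$ whenever $\varphi_j(x_j)=r_j$.

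Your extra checks only make explicit what the paper leaves implicit. These are the strict star-shapedness via Proposition~\ref{prop_cond_star}, and the fact that the strict inequality survives integration because $\int_J G_j(t,s)\,ds\ge c_j\int_J\Phi_j>0$ for $t\in J$.
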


\noindent
{\bf Proof.} Take $x\in\left((\overline{K}_1)_{R_1}\setminus\Omega^{\varphi_1}_{r_1}\right)\times\left((\overline{K}_2)_{R_2}\setminus\Omega^{\varphi_2}_{r_2}\right)$ with $\|x_j\|_\infty=R_j$. Hence $0\leq x_j(t)\leq R_j$ for $ t\in[0,1]$ and $c_jR_j\leq x_j(t)\leq R_j $ for $ t\in J$ whereas for $i\neq j$ we have $0\leq x_i(t)\leq R_i, \, t\in[0,1]$  and $r_i\leq x_i(t)\leq R_i,\, t\in J $. For $t\in[0,1]$
\[T_jx=\int_J G_j(t,s)f_j(s,x(s))ds+\int_{J^c}G_j(t,s)f_j(s,x(s))ds\leq \int_J G_j(t,s)B_jR_jds+\int_{J^c}G_j(t,s)A_jR_jds.\]
Taking the supremum for $t\in[0,1]$ gives $\|T_jx\|_\infty\leq (B_jS_j+A_jS_j^c)R_j<R_j$.

We have then $\|T_j x\|_\infty<\|x_j\|_\infty$ for $x\in\left((\overline{K}_1)_{R_1}\setminus\Omega^{\varphi_1}_{r_1}\right)\times\left((\overline{K}_2)_{R_2}\setminus\Omega^{\varphi_2}_{r_2}\right)$ with $\|x_j\|_\infty=R_j$, i.e., condition (A)-(a) in Theorem \ref{th_ppal} holds.

Let $h_j(t)=1$ for $t\in[0,1]$. Then $h_j\in K_j$ and $\|h_j\|_\infty=1$. We show that
\[x_j\neq T_j x+\lambda h_j \text{ for } x\in\left((\overline{K}_1)_{R_1}\setminus\Omega^{\varphi_1}_{r_1}\right)\times\left((\overline{K}_2)_{R_2}\setminus\Omega^{\varphi_2}_{r_2}\right) \text{ with } \varphi_j(x_j)=r_j \text{ and } \lambda\geq 0,\]
which corresponds to condition (A)-(b) in Theorem \ref{th_ppal}, and thus we complete the proof. In fact, if not, there exist $x\in\left((\overline{K}_1)_{R_1}\setminus\Omega^{\varphi_1}_{r_1}\right)\times\left((\overline{K}_2)_{R_2}\setminus\Omega^{\varphi_2}_{r_2}\right)$ with $\varphi_j(x_j)=r_j$ and $\lambda\geq 0$ such that $x_j=T_jx+\lambda h_j$. Observe that for $x\in\left((\overline{K}_1)_{R_1}\setminus\Omega^{\varphi_1}_{r_1}\right)\times\left((\overline{K}_2)_{R_2}\setminus\Omega^{\varphi_2}_{r_2}\right)$ with $\varphi_j(x_j)=r_j$ we have $r_j\leq x_j(t)\leq r_j/c_j$ and $r_i\leq x_i(t)\leq R_i$ for $t\in J$ ($i\neq j$). Then, for $t\in J$,
\[x_j(t)=\int_{0}^{1}G_j(t,s)f_j(s,x(s))ds+\lambda > D_jr_j\int_{J} G_j(t,s)ds+\lambda\geq r_j+\lambda\geq r_j.\]
Since $\varphi_j(x_j)=r_j$, this is a contradiction. \qed

The following theorem localizes the solution in annular shells and can be derived from those results in previous works \cite{PrecupFPT,JRL}. Once proved, it will be compared with Theorem~\ref{th_loc1}.

\begin{theorem}
	\label{th_loc2}
	Let hypotheses $(\mathcal{H}_1)-(\mathcal{H}_3)$ be satisfied. Suppose that there exists $\tilde{r}_j,\tilde{R}_j,A_j,B_j\in\mathbb{R}_+$, $j=1,2$, such that $\tilde{r}_j<\tilde{R}_j$, $B_j\leq d_j\leq A_j$ and $B_j S_j+A_j S^c_j<1$. Assume further that, for each $j\in\{1,2\}$, the following conditions hold:
	\begin{enumerate}[$(A)$]
		\item $0\leq M_j^{\tilde{R}}(s)\leq A_j \tilde{R}_j$ for $s\in J^c$ and $0\leq M_j^{c\tilde{r},\tilde{R}}(s)\leq B_j \tilde{R}_j$ for $s\in J$;
		\item $m_j^{c\tilde{r},\tilde{R}}(s)> D_j \tilde{r}_j$ for $s\in J$. %where
		%\[\tilde{m}_j^{\tilde{r},\tilde{R}}(s)=\min\{f_j(s,x_1,x_2): c_j\tilde{r}_j\leq x_j\leq \tilde{r}_j, \, c_i\tilde{r}_i\leq x_i\leq \tilde{R}_i\, (i\neq j)\}.\]
	\end{enumerate}
	
Then the system \eqref{hammer_sys} has at least one positive solution $(x_1,x_2)\in K$ such that $\tilde{r}_j<\|x_j\|_\infty<\tilde{R}_j\, (j=1,2)$. 
\end{theorem}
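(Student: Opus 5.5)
We apply Theorem~\ref{th_ppal} with annular shells, taking $\Omega_j=(K_j)_{\tilde r_j}:=\{x_j\in K_j:\|x_j\|_\infty<\tilde r_j\}$ and $\mathcal{O}_j=(K_j)_{\tilde R_j}$. Norm balls intersected with the cone are strictly star-shaped over $K_j$ by Proposition~\ref{prop_cond_star} with $\varphi=\|\cdot\|_\infty$. We also have $\overline{\Omega}_j\subset\mathcal{O}_j$ since $\tilde r_j<\tilde R_j$. The domain is therefore
\[
D=\big((\overline{K}_1)_{\tilde R_1}\setminus(K_1)_{\tilde r_1}\big)\times\big((\overline{K}_2)_{\tilde R_2}\setminus(K_2)_{\tilde r_2}\big),
\]
and we verify condition (A) of Theorem~\ref{th_ppal} for both components, using the constant function $S_j\equiv h_j\equiv 1$ as in the proof of Theorem~\ref{th_loc1}. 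This gives index $1$ and a fixed point with $\tilde r_j<\|x_j\|_\infty<\tilde R_j$, which is the claimed positive solution.

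First we check (A)-(a). Take $x\in D$ with $\|x_j\|_\infty=\tilde R_j$. Since $x_j\in K_j$, we have $c_j\tilde R_j\le x_j(t)\le \tilde R_j$ on $J$ and $0\le x_j\le\tilde R_j$ on $I$. For $i\neq j$, membership in $K_i$ together with $\|x_i\|_\infty\ge\tilde r_i$ gives $c_i\tilde r_i\le x_i(t)\le\tilde R_i$ on $J$ and $0\le x_i\le \tilde R_i$ on $I$. So on $J$ the values of $f_j$ are bounded by $M_j^{c\tilde r,\tilde R}(s)\le B_j\tilde R_j$ (here $c\tilde r$ stands for $(c_1\tilde r_1,c_2\tilde r_2)$), and on $J^c$ by $M_j^{\tilde R}(s)\le A_j\tilde R_j$. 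Splitting the integral over $J$ and $J^c$ exactly as in Theorem~\ref{th_loc1} yields
\[
\|T_jx\|_\infty\le (B_jS_j+A_jS_j^c)\tilde R_j<\tilde R_j=\|x_j\|_\infty ,
\]
so $T_jx\neq\lambda x_j$ for every $\lambda\ge1$.

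Next we check (A)-(b). The condition $\inf\|h_j\|=1>0$ is trivial. Suppose $x_j=T_jx+\mu h_j$ with $\|x_j\|_\infty=\tilde r_j$ and $\mu\ge0$. Then on $J$ we have $c_j\tilde r_j\le x_j(t)\le\tilde r_j$, and for $i\neq j$, $c_i\tilde r_i\le x_i(t)\le\tilde R_i$. These ranges are exactly those in the definition of $m_j^{c\tilde r,\tilde R}$ with $r_j$ replaced by $c_j\tilde r_j$, since then the range $[c_j\tilde r_j,\,c_j\tilde r_j/c_j]=[c_j\tilde r_j,\tilde r_j]$ is recovered. Dropping the nonnegative integral over $J^c$, for $t\in J$ we get
\[
x_j(t)\ge\int_J G_j(t,s)f_j(s,x(s))\,ds+\mu> D_j\tilde r_j\int_JG_j(t,s)\,ds+\mu\ge \tilde r_j .
\]
This contradicts $x_j(t)\le\|x_j\|_\infty=\tilde r_j$.

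The only delicate step is this identification of the intervals. The value $c_j\tilde r_j$ has to enter both the $j$-th and the $i$-th arguments of $m_j$ and $M_j$, because $K_j$ only gives the lower bound $c_j\|x_j\|_\infty$ on $J$, and that is precisely why hypothesis (A)–(B) is written with $c\tilde r$. One must also check that the strict inequality survives taking the minimum over the compact set $J$. This holds because $f_j$ is continuous and the inequality $m_j>D_j\tilde r_j$ is strict pointwise, so $\int_J G_j(t,s)\big(f_j(s,x(s))-D_j\tilde r_j\big)\,ds>0$, using $\int_J G_j(t,s)\,ds\ge D_j^{-1}>0$.
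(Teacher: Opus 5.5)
Your proposal is correct and follows essentially the same route as the paper: you apply Theorem~\ref{th_ppal} on the product of annular shells with both components under condition (A). You obtain (A)-(a) from the $J$/$J^c$ splitting of Theorem~\ref{th_loc1} with $M_j^{c\tilde r,\tilde R}$, and (A)-(b) with $S_j\equiv h_j\equiv 1$, which leads to the contradiction $x_j(t)>\tilde r_j$ on $J$; you simply make explicit the range identifications and the strictness argument that the paper leaves implicit.
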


\noindent
{\bf Proof.} Following the previous proof, we already know that condition (A)-(a) in Theorem \ref{th_ppal} holds for operator $T$ given by (\ref{oper}) in $\overline{K}_{\tilde{r},\tilde{R}}=\left((\overline{K}_1)_{\tilde{R}_1}\setminus (K_1)_{\tilde{r}_1}\right)\times\left((\overline{K}_2)_{\tilde{R}_2}\setminus (K_2)_{\tilde{r}_2}\right)$. It only remains to see that
\[x_j\neq T_j x+\lambda h_j \text{ for } x\in \overline{K}_{\tilde{r},\tilde{R}} \text{ with } \|x_j\|_\infty=\tilde{r}_j \text{ and } \lambda\geq 0,\]
where $h_j$ is the same function than in the previous proof. Suppose this is not true, then there exist $x\in\overline{K}_{\tilde{r},\tilde{R}}$ with $\|x_j\|_\infty=\tilde{r}_j$ and $\lambda\geq 0$ such that $x_j=T_jx+\lambda h_j$. Observe that for $x\in\overline{K}_{\tilde{r},\tilde{R}}$ with $\|x_j\|_\infty=\tilde{r}_j$ we have $c_j\tilde{r}_j\leq x_j(t)\leq \tilde{r}_j$  and $c_i\tilde{r}_i\leq x_i(t)\leq \tilde{R}_i\, (i\neq j)$ for $t\in J$. Then, for $t\in J$
\[x_j(t)=\int_0^1 G_j(t,s)f_j(s,x(s))ds+\lambda > D_j \tilde{r}_j\int_{J} G_j(t,s)ds+\lambda\geq \tilde{r}_j+\lambda\geq \tilde{r}_j.\]
Since $\|x_j\|_\infty=\tilde{r}_j$, it gives a contradiction. \qed

\begin{remark}
Observe that Theorems \ref{th_loc1} and \ref{th_loc2} are comparable when
\begin{equation}
	\label{cond_comp}
r_j = c_j \tilde{r}_j \text{ and } R_j = \tilde{R}_j, \quad j = 1,2,
\end{equation}
since in this case we have $m_j^{r,R}(s)=m_j^{c\tilde{r},\tilde{R}}(s)$ for $s\in[0,1]$ and $M_j^{r,R}(s)=M_j^{c\tilde{r},\tilde{R}}(s)$ for $s\in J$. In this situation, the hypotheses of Theorem~\ref{th_loc1} are weaker than those of Theorem~\ref{th_loc2}.  
Let us reflect on how this is related to the localization they provide.

Notice that, for $r_j \in \mathbb{R}_+$ ($j=1,2$), we have
\[
(K_j)_{r_j} \subset \Omega^{\varphi_j}_{r_j} \subset (K_j)_{\frac{r_j}{c_j}}, \quad j=1,2,
\]
since if $x_j \in (K_j)_{r_j}$, then $\|x_j\|_\infty = \max_{t \in I} x_j(t) < r_j$ and thus clearly $\min_{t \in J} x_j(t) < r_j$. Moreover, if $x_j\in \Omega^{\varphi_j}_{r_j}$, then $c_j\,\|x_j\|_\infty\leq \min_{t \in J} x_j(t) < r_j$ and so $\|x_j\|_\infty<r_j/c_j$.

\begin{figure}[h]
	\centering
	\includegraphics[scale=1]{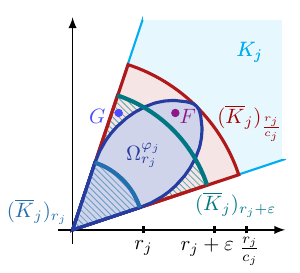}
	\vspace{-0.5cm}
	\caption{An illustrative idea of the relative position of the sets $(\overline{K}_j)_{r_j}$, $\overline{\Omega}_{r_j}^{\varphi_j}$ and $(\overline{K}_j)_{\frac{r_j}{c_j}}$.}
	\label{fig2}
\end{figure} 

Hence, for fixed $r_j, R_j \in \mathbb{R}_+$ ($j = 1,2$) in the setting of Theorem~\ref{th_loc1}, 
if we choose $\tilde{r}_j, \tilde{R}_j\in\mathbb{R}_+\,$($j=1,2$) in Theorem~\ref{th_loc2} according to~\eqref{cond_comp}, 
then Theorem~\ref{th_loc2} locates the fixed point in a subset of the localization set given by Theorem~\ref{th_loc1}. 
Therefore, since the localization set in Theorem~\ref{th_loc1} is larger, 
it is natural that its assumptions are weaker than those in Theorem~\ref{th_loc2}. 

On the other hand, if condition \eqref{cond_comp} does not hold, the previous existence results are not comparable, since the hypotheses imposed on $f_j$ cannot be compared. In addition, fixing $\varepsilon \in \left(0, \frac{r_j}{c_j}-r_j\right)$, it is possible to find elements in $(K_j)_{r_j+\varepsilon}$ that are not in $\Omega_{r_j}^{\varphi_j}$, as well as elements in $\Omega_{r_j}^{\varphi_j}$ that are not in $(K_j)_{r_j+\epsilon}$. Therefore,
\[
\Omega^{\varphi_j}_{r_j} \not\subset (K_j)_{r_j+\epsilon} \quad \text{and} \quad (K_j)_{r_j+\epsilon} \not\subset \Omega^{\varphi_j}_{r_j}.
\]

\begin{figure}[h]
	\centering
	\subfigure[$F\in \Omega_{r_j}^{\varphi_j}$ and $F\notin (K_j)_{r_j+\varepsilon}$.]{\includegraphics[scale=1.1]{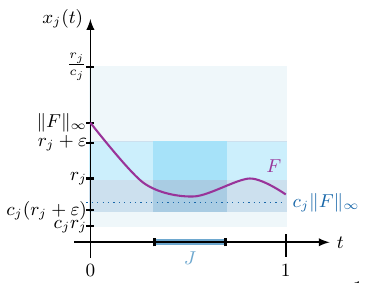}}
	\hspace{1cm}
	\subfigure[$G\in (K_j)_{r_j+\varepsilon}$ and $G\notin\Omega_{r_j}^{\varphi_j}$]{\includegraphics[scale=1.1]{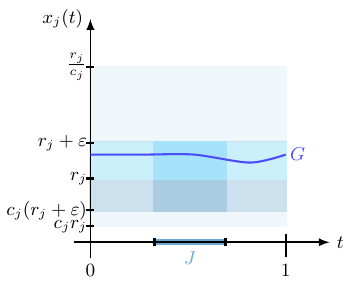}
	}
	\caption{Graphical representation of mutual non-inclusion between $\Omega_{r_j}^{\varphi_j}$ and $(K_j)_{r_j+\varepsilon}$ (see Figure~\ref{fig2}).}
\end{figure}

Consequently, if $r_j, \tilde{r}_j, R_j$ and $\tilde{R}_j$ $(j=1,2)$ are not chosen as stated in \eqref{cond_comp}, then there is no containment relation between the localization sets provided by Theorems~\ref{th_loc1} and~\ref{th_loc2}. 
This is consistent with the fact that the results are not comparable. 
\end{remark}

Under the assumptions of Theorems \ref{th_loc1} and \ref{th_loc2}, both components of the operator $T$ are compressive. In our next result the operator satisfies a mixed behavior (expansive in one component and compressive in the other). Moreover, it provides a new localization of the solution (the proof is omitted, as it follows similar reasoning to those in previous results). Note that employing different functionals, in turn, yields distinct existence results.

To this end, we introduce the following constant
\[s_j:=\max_{t\in J}\int_J G_j(t,s)ds, \quad s^c_j:=\max_{t\in J}\int_{J^c} G_j(t,s)ds, \quad (j=1,2).\]
Consider also, for $j\in\{1,2\}$, the functional $\psi:K_j\rightarrow\mathbb{R}^+$ defined by
\[\psi_j(x)=\max_{t \in J} x(t).\]

Observe that for $r_j,R_j\in\mathbb{R}_+\, (j=1,2)$ such that $r_j<c_jR_j$ we have $\overline{\Omega}^{\psi_j}_{r_j}\subset(K_j)_{R_j}$.

\begin{theorem}
	Let assumptions $(\mathcal{H}_1)-(\mathcal{H}_3)$ hold. Suppose that there exists $r_j,R_j,A_j,B_j\in\mathbb{R}_+$, $j=1,2$, such that $r_1<c_1R_1$, $r_2<c_2R_2$, $B_1\leq d_1\leq A_1$, $A_2\leq d_2\leq B_2$, $B_1S_1+A_1 S^c_1<1$ and $A_2 s_2+B_2 s^c_2<1$. Assume further that, for each $j\in\{1,2\}$, the following conditions hold:
	\begin{enumerate}[$(A)$]
		\item 
		\begin{enumerate}[$(a)$]
			\item $0\leq M_1^{R}(s)\leq A_1R_1$ for $s\in J^c$ and $0\leq M_1^{cr,R}(s)\leq B_1R_1$ for $s\in J$;
			\item $\tilde{m}_1^{r,R}(s)>D_1r_1$ for $s\in J$, where
			\[\tilde{m}_1^{r,R}(s)=\min\left\{f_1(s,x_1,x_2): r_1\leq x_1\leq \frac{r_1}{c_1}, \, c_2r_2\leq x_2\leq R_2\right\}.\]
		\end{enumerate}
		\item 
		\begin{enumerate}[$(a)$]
			\item $0\leq M_2^{r}(s)\leq A_2r_2$ for $s\in J$ and $0\leq \hat{M}_2^{r,R}(s)\leq B_2r_2$ for $s\in J^c$, where
			\[\hat{M}_2^{r,R}(s)=\max\{f_2(s,x_1,x_2): r_1\leq x_1\leq R_1,\, c_2r_2\leq x_2\leq R_2\};\]
			\item $\hat{m}_2^{r,R}(s)> D_2R_2$ for $s\in J$, where
			\[\hat{m}_2^{r,R}(s)=\min\left\{f_2(s,x_1,x_2): r_1\leq x_1\leq R_1, \, c_2R_2\leq x_2\leq R_2\right\}.\]
		\end{enumerate}
	\end{enumerate}
	
	Then the system \eqref{hammer_sys} has at least one positive solution $(x_1,x_2)\in K$ such that $\varphi_1(x_1)>r_1, \, \|x_1\|_\infty<R_1$ and $\psi_2(x_2)>r_2, \,  \|x_2\|_\infty<R_2$.
\end{theorem}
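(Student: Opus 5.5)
We apply Theorem~\ref{th_ppal} to the operator $T$ in \eqref{oper} on the set
\[D=\left((\overline{K}_1)_{R_1}\setminus\Omega^{\varphi_1}_{r_1}\right)\times\left((\overline{K}_2)_{R_2}\setminus\Omega^{\psi_2}_{r_2}\right).\]
Component $1$ will satisfy the compressive condition (A), and component $2$ the expansive condition (B). The sets fit the framework: $\overline{\Omega}^{\varphi_1}_{r_1}$ and $\overline{\Omega}^{\psi_2}_{r_2}$ are strictly star-shaped by Proposition~\ref{prop_cond_star} and Example~\ref{ex_funct}$(ii)$–$(iii)$. Since $r_j<c_jR_j$, they lie inside $(K_j)_{R_j}$, so (I)–(II) hold. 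The index is then $-1$, giving a fixed point with $\varphi_1(x_1)>r_1$, $\|x_1\|_\infty<R_1$, $\psi_2(x_2)>r_2$ and $\|x_2\|_\infty<R_2$.

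\emph{Component 1.} The argument repeats the proof of Theorem~\ref{th_loc1}, with one change in the ranges of $x_2$. For $x\in D$ we only know $\psi_2(x_2)\ge r_2$ on $J$, and in the cone this gives $x_2(t)\ge c_2\|x_2\|_\infty\ge c_2 r_2$ on $J$ and $x_2\le R_2$. This explains why $M_1^{cr,R}$ and $\tilde m_1^{r,R}$ use the range $c_2r_2\le x_2\le R_2$.

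On $\|x_1\|_\infty=R_1$, split the integral over $J$ and $J^c$ and bound by $A_1R_1$ and $B_1R_1$. This gives $\|T_1x\|_\infty\le(B_1S_1+A_1S_1^c)R_1<R_1$, which is (A)-(a).

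On $\varphi_1(x_1)=r_1$, take $h_1\equiv1$ and suppose $x_1=T_1x+\lambda h_1$. For $t\in J$ we have $r_1\le x_1\le r_1/c_1$, so
\[x_1(t)>D_1r_1\int_JG_1(t,s)\,ds+\lambda\ge r_1.\]
Taking the minimum over $J$ contradicts $\varphi_1(x_1)=r_1$, which gives (A)-(b) with $S_1\equiv h_1$.

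\emph{Component 2, inner boundary $\psi_2(x_2)=r_2$.} We must show $T_2x\ne\lambda x_2$ for $\lambda\ge1$. It suffices to prove $\psi_2(T_2x)<r_2$: if $T_2x=\lambda x_2$, then $\psi_2(T_2x)=\lambda r_2\ge r_2$, a contradiction. On $J$ we have $0\le x_2\le r_2$. On $J^c$, however, $x_2$ is only bounded by $\|x_2\|_\infty\le r_2/c_2$, and in $D$ also by $R_2$; this is why $\hat M_2^{r,R}$ appears.

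For $t\in J$, split the integral into $J$ and $J^c$ and use $M_2^r\le A_2r_2$ and $\hat M_2^{r,R}\le B_2r_2$. This yields
\[T_2x(t)\le(A_2s_2+B_2s_2^c)r_2<r_2,\]
so $\psi_2(T_2x)<r_2$. The main obstacle is the bookkeeping of the variable ranges entering $M_2^r$, which must cover the values of $x_1$ and $x_2$ on $J$. I would interpret $M_2^r$ as the maximum of $f_2$ over $0\le x_2\le r_2$ with $x_1$ in its admissible range $[0,R_1]$, and check that this is consistent with the hypotheses.

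\emph{Component 2, outer boundary $\|x_2\|_\infty=R_2$.} With $h_2\equiv1$, suppose $x_2=T_2x+\mu h_2$ for some $\mu\ge0$. On $J$ we have $c_2R_2\le x_2\le R_2$ and $r_1\le x_1\le R_1$. Then for $t\in J$,
\[x_2(t)>D_2R_2\int_JG_2(t,s)\,ds+\mu\ge R_2,\]
which contradicts $\|x_2\|_\infty=R_2$. This gives (B)-(b) with $S_2\equiv h_2$, and $\inf\|S_2\|=1>0$. Theorem~\ref{th_ppal} with $k=1$ then concludes the proof.
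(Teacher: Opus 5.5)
Your overall plan is the natural one. The paper omits this proof, saying only that it follows the earlier ones. You apply Theorem~\ref{th_ppal} on $D$ with component~1 compressive and component~2 expansive, $S_j\equiv h_j\equiv 1$, and $k=1$. Your verification for component~1 is correct, as is the outer boundary $\|x_2\|_\infty=R_2$ and the reduction of (B)-(a) to $\psi_2(T_2x)<r_2$.

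\textbf{The gap} is in the estimate of $T_2x(t)$, $t\in J$, on the inner boundary $\psi_2(x_2)=r_2$. For $s\in J^c$ you bound $f_2(s,x_1(s),x_2(s))$ by $\hat{M}_2^{r,R}(s)\le B_2r_2$. But $\hat{M}_2^{r,R}(s)$ is the maximum of $f_2(s,\cdot,\cdot)$ over the box $[r_1,R_1]\times[c_2r_2,R_2]$, and off $J$ the cones give no lower bounds at all.
\begin{itemize}
\item The inequalities $x_1\ge r_1$ (from $\varphi_1(x_1)\ge r_1$) and $x_2\ge c_2\|x_2\|_\infty\ge c_2r_2$ hold only on $J$.
\item For $s\in J^c$ you only know $0\le x_1(s)\le R_1$ and $0\le x_2(s)\le \|x_2\|_\infty\le r_2/c_2$.
\item Elements of $D$ can be arbitrarily small, even zero, on parts of $J^c$, so $(x_1(s),x_2(s))$ leaves the box.
\end{itemize}
You checked the upper bound on $x_2$ over $J^c$, but not the lower bounds that the definition of $\hat{M}_2^{r,R}$ presupposes. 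So $T_2x(t)\le (A_2s_2+B_2s_2^c)\,r_2$ does not follow from the hypotheses as printed.

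\textbf{How to close it.} Since $G_2(t,\cdot)$ is in general positive on $J^c$ for $t\in J$, an upper estimate of $\psi_2(T_2x)$ along these lines needs control of $f_2$ on $J^c$ over the range actually attained there. So you must read, or restate, the hypotheses as follows.
\begin{itemize}
\item For $s\in J^c$: $\max\{f_2(s,x_1,x_2): 0\le x_1\le R_1,\ 0\le x_2\le r_2/c_2\}\le B_2r_2$.
\item For $s\in J$: a bound by $A_2r_2$ over a box containing $[r_1,R_1]\times[c_2r_2,r_2]$.
\end{itemize}
With these readings your computation goes through verbatim.

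Your reinterpretation of $M_2^r$ is likewise not optional. Read literally as the analogue of $M_j^R$ with $R$ replaced by $r$, it is a maximum over $[0,r_1]\times[0,r_2]$, which does not cover $x_1(s)\in(r_1,R_1]$ for $s\in J$. Rather than ``checking consistency with the hypotheses,'' state explicitly which ranges your argument requires. As printed, the hypothesis on $J^c$, and the literal $M_2^r$ on $J$, do not supply them.
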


Finally, we use a functional of the form $(i)$ in Example~\ref{ex_funct} to establish a new existence result with a different localization. Let us then, define for $j\in\{1,2\}$, $\phi_j:K_j\rightarrow \mathbb{R}^+$ given by
\[\phi_j(x)=\frac{1}{2}\left(\min_{t \in J}x_j(t)+\|x_j\|_\infty\right).\]

Note that for $r_j,R_j\in\mathbb{R}^2_+\, (j=1,2)$, if $\frac{2}{c_j+1}r_j<R_j$ then $\overline{\Omega}^{\phi_j}_{r_j}\subset (K_j)_{R_j}$.

\begin{theorem}
	\label{th_loc4}
 	Let assumptions $(\mathcal{H}_1)-(\mathcal{H}_3)$ hold. Suppose that there exists $r_j,R_j,A_j,B_j\in\mathbb{R}_+$, $j=1,2$, such that $\frac{2}{c_j+1}r_j<R_j, \, B_j\leq d_j\leq A_j$ and $B_jS_j+A_jS^c_j<1$. Assume further that, for each $j\in\{1,2\}$, the following conditions hold:
 	\begin{enumerate}[$(A)$]
 		\item $0\leq M_j^{R}(s)\leq A_jR_j$ for $s\in J^c$ and $0\leq M_j^{\frac{2c}{c+1}r,R}(s)\leq B_jR_j$ for $s\in J$;
 		\item $\breve{m}_j^{r,R}(s)>D_jr_j$ for $s\in J$, where
 		\[\breve{m}_j^{r,R}(s)=\min\left\{f_j(s,x_1,x_2): \frac{2c_j}{c_j+1}r_j\leq x_j\leq \frac{2}{c_j+1}r_j, \, \frac{2c_i}{c_i+1}r_i\leq x_i\leq R_i\, (i\neq j)\right\}.\]
 	\end{enumerate}
 	
 	Then the system \emph{(\ref{hammer_sys})} has at least one positive solution $(x_1,x_2)\in K$ such that $\phi_j(x_j)>r_j$ and $\|x_j\|_\infty<R_j, \,(j=1,2).$
\end{theorem}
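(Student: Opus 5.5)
We follow the scheme of the proof of Theorem~\ref{th_loc1}, now with the functional $\phi_j$ in place of $\varphi_j$, and apply Theorem~\ref{th_ppal} with both components compressive, i.e.\ under condition (A), on the set
\[
D:=\left((\overline{K}_1)_{R_1}\setminus\Omega^{\phi_1}_{r_1}\right)\times\left((\overline{K}_2)_{R_2}\setminus\Omega^{\phi_2}_{r_2}\right).
\]
By Example~\ref{ex_funct}$(i)$, with $\alpha=\beta=1/2$, and Proposition~\ref{prop_cond_star}, the closure $\overline{\Omega}^{\phi_j}_{r_j}$ is strictly star-shaped over $K_j$. The hypothesis $\frac{2}{c_j+1}r_j<R_j$ gives $\overline{\Omega}^{\phi_j}_{r_j}\subset (K_j)_{R_j}$. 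Indeed, for $x\in K_j$ we have $\min_J x\ge c_j\|x\|_\infty$, so $\phi_j(x)\ge \frac{c_j+1}{2}\|x\|_\infty$. Hence the structural assumptions (I)--(II) hold with $\Omega_j=\Omega^{\phi_j}_{r_j}$ and $\mathcal{O}_j=(K_j)_{R_j}$.

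The first step is to collect the pointwise bounds on $D$. For $x\in K_j$ we have
\[
\frac{c_j+1}{2}\|x\|_\infty\le \phi_j(x)\le \|x\|_\infty ,
\]
so $\phi_j(x)\ge r_j$ implies $\|x\|_\infty\ge r_j$. Since $\min_J x\ge c_j\|x\|_\infty$, it follows that $x(t)\ge c_jr_j$ on $J$. The hypothesis is phrased with the lower bound $\frac{2c_j}{c_j+1}r_j$, which is sharper: on $\partial\Omega^{\phi_j}_{r_j}$ the bound $\|x\|_\infty\ge\phi_j(x)=r_j$ only yields $c_j r_j$. I would instead use $\|x\|_\infty\ge \phi_j(x)$ together with $\min_J x\ge c_j\|x\|_\infty$. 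If this only produces $c_jr_j$, I would check whether the stated interval is nevertheless attained, possibly via $\min_J x=2\phi_j(x)-\|x\|_\infty$ combined with $\|x\|_\infty\le \frac{2}{c_j+1}\phi_j(x)$. That combination gives
\[
\min_J x\ \ge\ 2r_j-\frac{2}{c_j+1}r_j=\frac{2c_j}{c_j+1}r_j
\]
when $\phi_j(x)=r_j$, and likewise $\ge\frac{2c_j}{c_j+1}r_j$ whenever $\phi_j(x)\ge r_j$, since $2\phi-\frac{2}{c+1}\phi$ is increasing in $\phi$. So on $D$ we get $x_i(t)\ge \frac{2c_i}{c_i+1}r_i$ on $J$, and on $\partial\Omega^{\phi_j}_{r_j}$ also $x_j\le \frac{2}{c_j+1}r_j$. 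These are exactly the ranges appearing in $M_j^{\frac{2c}{c+1}r,R}$ and $\breve m_j^{r,R}$.

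Condition (A)-(a) of Theorem~\ref{th_ppal}, on $\|x_j\|_\infty=R_j$, is verified as in Theorem~\ref{th_loc1}. Split $T_jx(t)$ into the integrals over $J$ and $J^c$. On $J^c$ use $M_j^R\le A_jR_j$. On $J$, the values of $x$ lie in the box defining $M_j^{\frac{2c}{c+1}r,R}$: we have $x_j\in[c_jR_j,R_j]$ on $J$, and the other component has the lower bound obtained above. This gives $\|T_jx\|_\infty\le (B_jS_j+A_jS^c_j)R_j<R_j$, hence $T_jx\ne\lambda x_j$ for $\lambda\ge1$.

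Condition (A)-(b) is obtained with $S_j\equiv h_j\equiv 1$, through the homotopic version in Remark~\ref{remark_versions}. Suppose $x_j=T_jx+\lambda$ with $\phi_j(x_j)=r_j$. Then for $t\in J$,
\[
x_j(t)>D_jr_j\int_JG_j(t,s)\,ds+\lambda\ge r_j .
\]
Thus $\min_J x_j>r_j$, and therefore $\phi_j(x_j)\ge \min_J x_j>r_j$, which is a contradiction. Theorem~\ref{th_ppal} then yields a fixed point in $\left((K_1)_{R_1}\setminus\overline{\Omega}^{\phi_1}_{r_1}\right)\times\left((K_2)_{R_2}\setminus\overline{\Omega}^{\phi_2}_{r_2}\right)$, which is the claimed localization. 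The main obstacle is the first step, namely getting the sharp pointwise interval on $J$ from $\phi_j$; the two-sided estimate above resolves it.
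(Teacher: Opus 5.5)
Your proof is correct. It is the argument the paper intends: the paper omits the proof of this theorem, which follows the scheme of Theorem~\ref{th_loc1} via Theorem~\ref{th_ppal} with $h_j\equiv 1$. Your two-sided estimate $\frac{c_j+1}{2}\|x\|_\infty\le \phi_j(x)$ and $\min_J x = 2\phi_j(x)-\|x\|_\infty\ge \frac{2c_j}{c_j+1}\phi_j(x)$ is exactly what justifies the ranges in $M_j^{\frac{2c}{c+1}r,R}$ and $\breve m_j^{r,R}$. In a final write-up, state that estimate directly instead of detouring through the weaker bound $c_jr_j$.
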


\begin{remark}
	\label{re5.2}
Note that, throughout this section, we assume that the nonlinearities $f_j$ are nonnegative on $[0,1]\times\mathbb{R}^+\times\mathbb{R}^+$. This condition is imposed to guarantee that the operator $T$ maps the cone $K$ into itself. Actually, it suffices to require that
\[
f_j(t,x_1,x_2)\geq 0, \quad \text{for all } t\in[0,1] \text{ and } (x_1,x_2)\in[0,R_1]\times[0,R_2],
\]
to guarantee that the operator maps into $K$ the corresponding sets on which Theorem~\ref{th_ppal} is applied in each case.
\end{remark}

\begin{example}
Consider the following system of second-order equations, where one component is subject to Dirichlet boundary conditions and the other to Dirichlet–Neumann mixed boundary conditions.
\begin{equation}
	\label{sys_diff}
	\begin{cases}
		x''(t) + f_1(t,x,y) = 0,\quad t\in[0,1],\\
		y''(t) + f_2(t,x,y) = 0,\quad t\in[0,1],\\
		x(0) = x(1) = 0 = y(0) = y'(1),
	\end{cases}
\end{equation}
with $f_1(t,x,y) = \bigl(2+\frac{y}{10}\bigr)e^{4-\sqrt{x+1}}+\frac{t}{2}$, $f_2(t,x,y)=\sqrt[100]{y}\left(10g(y)+\frac{10+t^2}{1+e^{-x}}\right)$
and
\[
g(y) =
\begin{cases}
	e^{-10y}, & \text{if } y\in[0,10],\\
	e^{-10y}+\sin(y-10), & \text{if } y>10.
\end{cases}
\]
We can associate to \eqref{sys_diff} a system of Hammerstein type equations of the form \eqref{hammer_sys}, where the kernels are given by the corresponding Green’s functions
\[
G_1(t,s)=
\begin{cases}
	s(1-t), & \text{if } s\le t,\\
	t(1-s), & \text{if } s>t,
\end{cases}
\qquad
G_2(t,s)=
\begin{cases}
	s, & \text{if } s\le t,\\
	t, & \text{if } s>t.
\end{cases}
\]
It is well-known (see, for instance, \cite{infante}) that condition $(\mathcal{H}_2)$ holds if we take $\Phi_1(s)=s(1-s)$, $\Phi_2(s)=s$, $J=[1/4,3/4]$ and $c_j=1/4$ $(j=1,2)$. This choice leads to $d_1=D_2=8$, $d_2=2$, $D_1=16$, $S_1=3/32$, $S^c_1=1/32$, $S_2=22/32$ and $S^c_2=9/32$. 

Observe that $y(t) = 0$ for $t \in [0,1]$ satisfies the second equation together with its associated boundary conditions. Furthermore, the following second-order boundary value problem  
	\begin{equation}
	\label{sys_diff2}
		\begin{cases}
		x''(t) + \tilde{f}_1(t,x) = 0,\quad t\in[0,1],\\
		x(0) = x(1) = 0 ,
	\end{cases}
	\end{equation}
	where $\tilde{f}_1(t,x) = f_1(t,x,0)$ admits a solution $\tilde{x} \in K_1$ satisfying $1 \le \|\tilde{x}\|_\infty \le 6$ (see \cite[Theorem 1.0.7]{infante}, suitably adapted to the non-autonomous setting). Consequently, $(\tilde{x},0)$ constitutes a solution for our system \eqref{sys_diff}.  
	
	We now proceed to identify a second, distinct solution by appealing to our previous result. The localization ensured for this second solution excludes the case in which any of its components is zero.

In connection with Remark \ref{re5.2}, observe that $f_1$ is nonnegative on $[0,1]\times\mathbb{R}^+\times\mathbb{R}^+$, and $f_2$ on $[0,1]\times[0,10]^2$.

Take $r_1=1$, $r_2=1/4$, $R_1=10$ and $R_2=20$. Choose $A_1=9$ and $B_1=4$. Observe that $B_1\leq d_1\leq A_1$ and $A_1S^c_1+B_1S_1=21/32<1$. Then $0\leq M_1^{R}(s)\leq 90=A_1R_1, $ for $s\in[0,1]$, $0\leq M_1^{\frac{2c}{c+1}r,R}(s)\leq 40=B_1R_1,$ for $s\in J$ and $\breve{m}_1^{r,R}(s)>16=D_1r_1,$ for $s\in J$.
The first component of the operator $T$ satisfies conditions (A) and (B) of Theorem~\ref{th_loc4}. We shall show that these conditions also hold for the second component by taking $A_2 = 2$ and $B_2 = 3/5$. Clearly, $B_2\leq d_2\leq A_2$ and $A_2S^c_2+B_2S_2<1$. In addition
\[0\leq M_2^{R}(s)\leq 40=A_2R_2, \text{ for } s\in[0,1],\]
\[0\leq M_2^{\frac{2c}{c+1}r,R}(s)\leq 12=B_2R_2, \text{ and }  \breve{m}_2^{r,R}(s)>2=D_2r_2,  \text{ for } s\in J.\]
By Theorem \ref{th_loc4}, we know that the system \eqref{sys_diff} has at least one positive solution $(x,y)\in K$ such that
\[\frac{1}{2}\left(\min_{t \in J}x(t)+\|x\|_\infty\right)>1, \quad \|x\|_\infty<10\]
and
\[\frac{1}{2}\left(\min_{t \in J}y(t)+\|y\|_\infty\right)>\dfrac{1}{4}, \quad \|y\|_\infty<20.\]
\end{example}

\subsection{$\Phi$-Laplace systems}

Consider the problem
\begin{equation}
	\label{phy_lap_sys}
	\begin{cases}
		\left(\Phi_1(x_1^\prime(t))\right)^\prime+f_1(x_1(t),x_2(t))=0, & t \in I,\\
		\left(\Phi_2(x_2^\prime(t))\right)^\prime+f_2(x_1(t),x_2(t))=0, & t \in I,\\
		x_1^\prime(0)=x_1(1)=0=x_2^\prime(0)=x_2(1),
	\end{cases}
\end{equation}
where, for $j\in\{1,2\}$, $\Phi_j:(-a_j,a_j)\rightarrow\mathbb{R}$ are increasing odd homeomorphisms with $0<a_j\leq+\infty$ and $f_j:[0,\infty)\times[0,\infty)\rightarrow[0,\infty)$ are continuous functions. 

Note that each of the homeomorphisms can be \textit{singular} if $a_j<+\infty$ or \textit{classical} if $a_j=+\infty$. The prototype of classical homeomorphism is the $p$-Laplacian operator given by $\Phi:\mathbb{R}\to\mathbb{R}$, $\Phi(x)=\left|x\right|^{p-2}x$, with $p>1$, whereas the paradigmatic example of singular homeomorphism is the mean curvature operator in Minkowski space defined by $\Phi:(-1,1)\to\mathbb{R}$, $\Phi(x)=x(1-x^2)^{-1/2}$.

We seek solutions of system \eqref{phy_lap_sys} in the cone $\mathcal{K} := \mathcal{K}_1 \times \mathcal{K}_2$, where for $j\in\{1,2\}$ we denote
\[\mathcal{K}_j := \{u \in \mathcal{C}(I;\mathbb{R}^+) : 
u \ \text{is concave and nonincreasing}\}.\]

By standard arguments (see \cite{bereanu, herlea}), it follows that solutions of 
$\eqref{phy_lap_sys}$ correspond precisely to the fixed points of the 
completely continuous operator 
$\mathcal{T} = (\mathcal{T}_1,\mathcal{T}_2): \mathcal{K} \to \mathcal{K}$,
defined by 
\[
\mathcal{T}_j(x_1,x_2)(t)
= \int_{t}^{1} 
\Phi_j^{-1}\!\left( \int_{0}^{s} f_j(x_1(\tau), x_2(\tau)) \, d\tau \right) ds,
\qquad t \in I,\quad (j = 1,2).
\]

Operator $\mathcal{T}$ is clearly well-defined, that is, it maps the cone $\mathcal{K}$ into itself. To verify this, fix $x\in\mathcal{K}$ and, for a given $j\in\{1,2\}$, define $g_j:I\to\mathbb{R}$ by
\[g_j(s)=\Phi_j^{-1}\left(\int_{0}^{s}f_j(x_1(\tau),x_2(\tau))dt\right).\]
The nonnegativity of $f_j$ together with the monotonicity of $\Phi_j^{-1}$ ensure that $g_j$ is nondecreasing. It is also immediate that $g_j$ is continuous and nonnegative. Moreover, since
\[\frac{d}{dt}\mathcal{T}_j(x_1,x_2)(t)=-g_j(t),\]
we conclude that $\mathcal{T}_j(x_1,x_2)(t)$ is nonincreasing and concave, as its derivative is negative and nonincreasing.

Let us consider the functional $\gamma_j:\mathcal{K}_j\rightarrow\mathbb{R}^+$ given by
\[\gamma_j(x)=\int_{0}^{1}x(t)dt=\|x\|_{L^1}.\]

Note that the $L^{1}$-norm does not satisfy the usual requirements. For instance, the space $(\mathcal{C}(I;\mathbb{R}^{+}), \|\cdot\|_{1})$ is not complete and as a consequence, the $L^{1}$-norm is not equivalent to the sup-norm $\|\cdot\|_{\infty}$ on $\mathcal{C}(I;\mathbb{R}^{+})$. Nevertheless, we will use $\|\cdot\|_{1}$ as a functional to define a strictly star-shaped set that will allow us to localize a solution.
\begin{remark}
Fix $j \in \{1,2\}$. The functional $\gamma_j$ satisfies conditions $(C_1)$ and $(C_2)$ in Proposition \ref{prop_cond_star}. Indeed, for any $x \in \mathcal{K}_j$, we have $x(t) \ge 0$ for all $t \in I$, and $x$ is concave and nonincreasing. It follows that 
\[x(t) \ge (1-t)x(0) + t x(1) \ge (1-t)x(0) = (1-t)\|x\|_\infty \ \text{ for all } t\in I.\]
Hence, $\gamma_j(x) = \int_0^1 x(t) \, dt \ge \frac{1}{2} \|x\|_\infty$. Therefore, condition $(C_1)$ is fulfilled, while condition $(C_2)$ is an immediate consequence of the linearity of the integral.
\end{remark}

We now present an existence result for the system \eqref{phy_lap_sys}, which follows from applying Theorem~\ref{the_funct}. 

\begin{theorem}
	\label{th_existence_phi}
	Suppose that, for both $j\in \{1,2\}$, there exist $r_j, R_j > 0$ with $2\,r_j < R_j$, such that
\[\Phi_j^{-1}\left(\frac{1}{2} m_j^r \right) > \frac{8}{3} r_j \quad \text{ and } \quad \Phi_j^{-1}\left(M_j^{R}\right) < R_j,\]
	where
	\[
	m_j^r = \min \left\{ f_j(x_1,x_2) : \frac{r_j}{2} \le x_j \le 2\, r_j, \, \frac{r_k}{2} \le x_k \le R_k \, (k\neq j) \right\},
	\]
	\[
	M^{R}_j = \max \left\{ f_j(x_1,x_2) : 0 \le x_1 \le R_1, \, 0 \le x_2 \le R_2 \right\}.
	\]
	
	Then the system \eqref{phy_lap_sys} has at least one positive solution $(x_1,x_2)\in\mathcal{K}$ such that $\gamma_j(x_j)>r_j$ and $\|x_j\|_\infty<R_j$, $(j=1,2)$.
\end{theorem}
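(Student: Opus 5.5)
We would apply Theorem~\ref{the_funct} in the cone $\mathcal{K}=\mathcal{K}_1\times\mathcal{K}_2$ of $\mathcal{C}(I)\times\mathcal{C}(I)$ with the sup norm, taking $\varphi_j:=\gamma_j$ (the $L^1$ functional) and $\psi_j:=\|\cdot\|_\infty$. Both components would be shown to satisfy the compressive condition (A). This yields index $1$, hence a fixed point of $\mathcal{T}$ in $\left(\Omega^{\|\cdot\|_\infty}_{R_1}\setminus\overline{\Omega}^{\gamma_1}_{r_1}\right)\times\left(\Omega^{\|\cdot\|_\infty}_{R_2}\setminus\overline{\Omega}^{\gamma_2}_{r_2}\right)$, which is exactly the claimed localization.

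\textbf{Preliminary checks.} $\gamma_j$ is continuous for $\|\cdot\|_\infty$ and satisfies $(C_1)$ and $(C_2)$ by the preceding Remark, and the sup norm trivially does too. For $x\in\mathcal{K}_j$ the estimate $x(t)\ge (1-t)\|x\|_\infty$ gives two bounds:
\[
\tfrac12\|x\|_\infty\le\gamma_j(x)\le\|x\|_\infty .
\]
Hence $\gamma_j(x)\le r_j$ implies $\|x\|_\infty\le 2r_j<R_j$, so $\overline{\Omega}^{\gamma_j}_{r_j}\subset\Omega^{\|\cdot\|_\infty}_{R_j}$ and the domain in Theorem~\ref{the_funct} is the right one. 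We also recall that $\mathcal{T}$ is compact and maps $\mathcal{K}$ into $\mathcal{K}$. For $x\in\mathcal{K}$ we set $g_j(s)=\Phi_j^{-1}\bigl(\int_0^s f_j(x_1,x_2)\bigr)$, which is nonnegative and nondecreasing.

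\textbf{Outer boundary.} Let $\|x_j\|_\infty=R_j$ with $x$ in the domain. Every component satisfies $0\le x_k\le R_k$, so $f_j(x_1(\tau),x_2(\tau))\le M_j^R$ and hence $g_j(s)\le\Phi_j^{-1}(sM_j^R)\le\Phi_j^{-1}(M_j^R)$. Since $\mathcal{T}_jx$ is nonincreasing,
\[
\|\mathcal{T}_jx\|_\infty=\mathcal{T}_jx(0)=\int_0^1 g_j\le\Phi_j^{-1}(M_j^R)<R_j .
\]

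\textbf{Inner boundary (main step).} Let $\gamma_j(x_j)=r_j$. We need $\gamma_j(\mathcal{T}_jx)>r_j$.
\begin{itemize}
\item First, $\|x_j\|_\infty\le 2\gamma_j(x_j)=2r_j$. For $t\in[0,1/2]$ we have $x_j(t)\ge(1-t)\|x_j\|_\infty\ge\tfrac12\|x_j\|_\infty\ge\tfrac12\gamma_j(x_j)=r_j/2$.
\item For the other component, $\gamma_k(x_k)\ge r_k$ (being outside $\Omega^{\gamma_k}_{r_k}$) and $\|x_k\|_\infty\le R_k$, so the same argument gives $r_k/2\le x_k(t)\le R_k$ on $[0,1/2]$.
\item Therefore $f_j(x_1(\tau),x_2(\tau))\ge m_j^r$ for $\tau\in[0,1/2]$. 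For $s\ge1/2$ this gives $\int_0^s f_j\ge m_j^r/2$, and so $g_j(s)\ge\Phi_j^{-1}(m_j^r/2)$.
\item By Fubini,
\[
\gamma_j(\mathcal{T}_jx)=\int_0^1\!\int_t^1 g_j(s)\,ds\,dt=\int_0^1 s\,g_j(s)\,ds\ge\Phi_j^{-1}\!\left(\tfrac12 m_j^r\right)\int_{1/2}^1 s\,ds=\tfrac38\,\Phi_j^{-1}\!\left(\tfrac12 m_j^r\right)>r_j .
\]
\end{itemize}
This verifies condition (A) of Theorem~\ref{the_funct} for both $j$, which concludes the argument.

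The delicate point is this inner estimate: one must extract a uniform lower bound on both components on a fixed subinterval $[0,1/2]$ from information given only in $L^1$. This is done through the concavity and monotonicity encoded in $\mathcal{K}_j$, and this is what produces the constants $\tfrac12$, $2$ and $\tfrac83$ in the hypotheses. One should also confirm that each $\Phi_j^{-1}$ is evaluated in its domain, namely all of $\mathbb{R}$, which holds because $\Phi_j$ is a homeomorphism onto $\mathbb{R}$. Positivity of the solution follows from $\gamma_j(x_j)>r_j>0$.
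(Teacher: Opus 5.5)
Your proof is correct and follows the paper's argument: you apply Theorem~\ref{the_funct} with $\varphi_j=\gamma_j$ and $\psi_j=\|\cdot\|_\infty$, with both components compressive, and you use the same outer bound $\mathcal{T}_jx(0)\le\Phi_j^{-1}(M_j^R)$. The only cosmetic difference is that you compute $\gamma_j(\mathcal{T}_jx)=\int_0^1 s\,g_j(s)\,ds$ by Fubini, whereas the paper bounds $\mathcal{T}_jx(1/2)$ and then uses concavity to get $\gamma_j(\mathcal{T}_jx)\ge\tfrac34\,\mathcal{T}_jx(1/2)$; both routes give the same lower bound $\tfrac38\,\Phi_j^{-1}\!\left(\tfrac12 m_j^r\right)$.
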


\noindent
{\bf Proof.} Firstly, note that for $j\in\{1,2\}$ we have $\overline{\Omega}_{r_j}^{\gamma_j}\subset\Omega_{R_j}^{\|\cdot\|_\infty}$. Take $x\in \left(\overline{\Omega}_{R_1}^{\|\cdot\|_\infty}\setminus\Omega_{r_1}^{\gamma_1}\right)\times\left(\overline{\Omega}_{R_2}^{\|\cdot\|_\infty}\setminus\Omega_{r_2}^{\gamma_2}\right)$ with $\|x_j\|_\infty=R_j$. It is clear that
\[\|\mathcal{T}_j x\|_\infty=\mathcal{T}_j x(0)=\int_{0}^{1}\Phi_j^{-1}\left(\int_{0}^{s}f_j(x(\tau))d\tau\right)ds\leq \Phi_j^{-1}(M_j^R)<R_j=\|x_j\|_\infty.\]
Let $x \in \left(\overline{\Omega}_{R_1}^{\|\cdot\|_\infty} \setminus \Omega_{r_1}^{\gamma_1}\right) 
\times \left(\overline{\Omega}_{R_2}^{\|\cdot\|_\infty} \setminus \Omega_{r_2}^{\gamma_2}\right)$ 
with $\gamma_j(x_j) = r_j$. Then $\|x_j\|_\infty = x_j(0) \le 2\, r_j$. Since $x_j$ is concave and nonnegative, we have 
\[
x_j\left(\frac{1}{2}\right) \ge \frac{x_j(0)}{2} \ge \frac{r_j}{2}.
\] 
Moreover, as $x_j$ is nonincreasing, it follows that 
\[
\frac{r_j}{2} \le x_j(\tau) \le 2\, r_j \quad \text{for all } \tau \in \left[0, \frac{1}{2}\right].
\] 
Similarly, for $k \neq j$, since $x_k \in \overline{\Omega}_{R_k}^{\|\cdot\|_\infty} \setminus \Omega_{r_k}^{\gamma_k}$, we obtain
\[
\frac{r_k}{2} \le x_k(\tau) \le R_k \quad \text{for all } \tau \in \left[0, \frac{1}{2}\right].
\] 

Hence, we obtain
\begin{equation*}
	\begin{aligned}
		\mathcal{T}_j x\left(\frac{1}{2}\right) 
		&= \int_{\frac{1}{2}}^{1} \Phi_j^{-1}\Biggl(\int_0^s f_j(x_1(\tau),x_2(\tau)) \, d\tau\Biggr) ds \ge \int_{\frac{1}{2}}^{1} \Phi_j^{-1}\Biggl(\int_0^{\frac{1}{2}} f_j(x_1(\tau),x_2(\tau)) \, d\tau\Biggr) ds \\
		&= \frac{1}{2} \Phi_j^{-1}\Biggl(\int_0^{\frac{1}{2}} f_j(x_1(\tau),x_2(\tau)) \, d\tau\Biggr) \ge \frac{1}{2} \Phi_j^{-1}\left(\frac{1}{2} m_j^r\right) > \frac{4}{3} r_j.
	\end{aligned}
\end{equation*}

Finally, using the concavity and nonincreasing property of $\mathcal{T}_j x$, we have
\[
\gamma_j(\mathcal{T}_j x) = \int_0^{\frac{1}{2}} \mathcal{T}_j x(t) \, dt + \int_{\frac{1}{2}}^1 \mathcal{T}_j x(t) \, dt 
\ge \frac{3}{4} \mathcal{T}_j x\left(\frac{1}{2}\right) > r_j = \gamma_j(x_j),
\]
which establishes the last desired inequality for applying Theorem \ref{the_funct}. As a conclusion, problem \eqref{phy_lap_sys} has at least one solution $(x_1,x_2)\in\mathcal{K}$ such that $\gamma_j(x_j)=\int_{0}^{1}x_j(t)dt>r_j $ and $\|x_j\|_\infty<R_j$ ($j=1,2$).\qed

\begin{remark}
	\label{remark_singular}
	If $\Phi_j$ is singular, then condition $\Phi_j^{-1}(M_j^R)<R_j$ is trivially satisfied for $R_j:=a_j$.
\end{remark}

We may consider an asymptotic behavior on the quotient $f_j/\Phi_j$ at zero, which preserves the existence of a solution for \eqref{phy_lap_sys} with both nontrivial components. Accordingly, we assume that, for each $j \in \{1,2\}$, homeomorphism $\Phi_j$ is singular and satisfies
\begin{equation}
	\label{eq_asymp_phi}
\limsup_{x \to 0^+} \frac{\Phi_j(\tau x)}{\Phi_j(x)} < +\infty \quad \text{for all } \tau>1.
\end{equation}
The technical assumption \eqref{eq_asymp_phi} is commonly found in the existing literature. For instance, it has been invoked in \cite{DrGaMa} in the context of a \textit{classical} homeomorphism and in \cite{bereanu1} for a \textit{singular} $\Phi$-Laplacian.

\begin{corollary}
	\label{cor_final}
	Suppose that, for both $j\in\{1,2\}$, $\Phi_j$ is singular, satisfies \eqref{eq_asymp_phi} and the function $f_j$ is nondecreasing and 
	\begin{equation}
		\label{eq_asymp_f}
		\lim_{x\rightarrow0^+}\frac{f_j(x,x)}{\Phi_j(x)}=+\infty.
	\end{equation}
	
	Then \eqref{phy_lap_sys} has at least one positive solution with both nontrivial, concave and nonincreasing components.
\end{corollary}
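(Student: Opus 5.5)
The plan is to deduce Corollary~\ref{cor_final} from Theorem~\ref{th_existence_phi}, choosing $R_j:=a_j$ and $r_j>0$ small. Since each $\Phi_j$ is singular, Remark~\ref{remark_singular} suggests the upper condition should come for free with $R_j=a_j$. Strictly speaking, $\Phi_j^{-1}$ takes values in $(-a_j,a_j)$, so $\Phi_j^{-1}(M_j^R)<a_j$ holds automatically. The only requirement on $r_j$ from this side is $2r_j<a_j$, which holds once $r_j$ is small.

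The main work is the lower inequality
\[
\Phi_j^{-1}\Bigl(\tfrac12 m_j^r\Bigr)>\tfrac83 r_j .
\]
Since $\Phi_j$ is increasing, this is equivalent to $\tfrac12 m_j^r>\Phi_j(\tfrac83 r_j)$, provided $\tfrac83 r_j<a_j$. Because $f_j$ is nondecreasing, the minimum defining $m_j^r$ is attained at the lower corner of the box. This gives $m_j^r\ge f_j(r_1/2,r_2/2)$. To reduce to the diagonal quantity in \eqref{eq_asymp_f}, I will link $r_1$ and $r_2$ by choosing both equal to a common small $r>0$. Then, with $x:=r/2$, we get $m_j^r\ge f_j(x,x)$. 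The target inequality becomes
\[
f_j(x,x)>2\,\Phi_j\Bigl(\tfrac{16}{3}x\Bigr).
\]

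To verify it, write
\[
\frac{f_j(x,x)}{\Phi_j(\frac{16}{3}x)}=\frac{f_j(x,x)}{\Phi_j(x)}\cdot\frac{\Phi_j(x)}{\Phi_j(\frac{16}{3}x)}.
\]
By \eqref{eq_asymp_phi} with $\tau=16/3$, the second factor is bounded below by some $\delta>0$ for all $x$ near $0^+$. By \eqref{eq_asymp_f}, the first factor tends to $+\infty$. So the product exceeds $2$ for all sufficiently small $x$, and we fix such an $r$ for both $j$ simultaneously, also small enough that $2r<\min\{a_1,a_2\}$ and $\tfrac{16}{3}\cdot\tfrac r2<\min\{a_1,a_2\}$.

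Theorem~\ref{th_existence_phi} then yields $(x_1,x_2)\in\mathcal K$ solving \eqref{phy_lap_sys} with $\gamma_j(x_j)>r>0$. Hence both components are nontrivial, and they are concave and nonincreasing by definition of $\mathcal K$.

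The step I expect to be delicate is the choice $R_j=a_j$. The sets $\Omega^{\|\cdot\|_\infty}_{a_j}$ are bounded, but the operator $\mathcal T_j$ is only defined on a region where its values stay in $(-a_j,a_j)$. If $a_j=+\infty$ were allowed this would fail, but here $\Phi_j$ is singular. If some care about the boundary is needed, I will instead take $R_j<a_j$ close to $a_j$. Since $\Phi_j^{-1}$ has values in $(-a_j,a_j)$, we have $\Phi_j^{-1}(M_j^{R})<a_j$. The issue is that we need it below $R_j$, and $M_j^R$ increases with $R$, so the two sides move together. The cleanest resolution is to observe directly that $\|\mathcal T_jx\|_\infty<a_j$ for every $x$, which is exactly what Remark~\ref{remark_singular} records. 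I will rely on that remark as stated.
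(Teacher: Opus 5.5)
Your proposal is correct and follows essentially the same argument as the paper: you take $r_1=r_2=r$ and $R_j=a_j$ via Remark~\ref{remark_singular}, use monotonicity to reduce $m_j^r$ to $f_j(r/2,r/2)$, and split $f_j(x,x)/\Phi_j(\frac{16}{3}x)$ into the two factors controlled by \eqref{eq_asymp_f} and \eqref{eq_asymp_phi}. Your explicit substitution $x=r/2$ and the proviso $\frac83 r<a_j$ are slightly more careful than the paper's displayed rewriting, and the worry in your last paragraph is unnecessary, since $\mathcal T$ is defined on all of $\mathcal K$ and $\Phi_j^{-1}(M_j^{R})<a_j$ holds directly.
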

\noindent
{\bf Proof.} Note that, for a fixed $j \in \{1,2\}$, if $f_j$ is nondecreasing in both arguments and $r_1 = r_2 = r$, then $m_j^r = f_j\left(\frac{r}{2}, \frac{r}{2}\right).$ Hence, inequality $\Phi_j^{-1}\left(\frac{1}{2} m_j^r\right) > \frac{8}{3} r_j$, in Theorem \ref{th_existence_phi}, can be equivalently rewritten as
\[
2 < \frac{f_j(r,r)}{\Phi_j\left(\frac{16}{3} r\right)} 
= \frac{f_j(r,r)}{\Phi_j(r)} \cdot \frac{\Phi_j(r)}{\Phi_j\left(\frac{16}{3} r\right)}.
\]
By \eqref{eq_asymp_phi} and \eqref{eq_asymp_f}, it follows that there exists a sufficiently small $r \in \mathbb{R}_+$ for which this holds.  
The result then follows directly from Theorem~\ref{th_existence_phi}, in view of Remark~\ref{remark_singular}.
 \qed

Finally, the existence theory is illustrated with the following example, which involves power-type nonlinearities. 
 
 \begin{example}
 Our Corollary \ref{cor_final} guarantees that there exists at least one positive solution with both components nontrivial for a system of the form
 \begin{equation*}
% 	\label{phy_lap_sys}
 	\begin{cases}
 		-\left(\frac{x^\prime}{\sqrt{1-(x^\prime)^2}}\right)^\prime=a\, x^\alpha y^\beta,\\
 		-\left(\frac{y^\prime}{\sqrt{1-(y^\prime)^2}}\right)^\prime=b\,x^\lambda+c\,y^\mu,\\
 		x^\prime(0)=x(1)=0=y^\prime(0)=y(1),
 	\end{cases}
 \end{equation*}
 provided that $a,b,c>0$, $\alpha,\beta,\lambda,\mu\geq 0$, $\alpha+\beta<1$, $\lambda<1$ and $\mu<1$.
 
 In this case, the differential operator is given by the \textit{mean curvature operator in Minkowski space}, i.e., the singular homeomorphism $\Phi_j:(-1,1)\to\mathbb{R}$, $\Phi_j(x)=x(1-x^2)^{-1/2}$, $j=1,2$, which clearly satisfies \eqref{eq_asymp_phi}.
 On the other hand, observe that the system admits the trivial solution $(x,y)\equiv(0,0)$ as well as a semi-trivial solution $(0,\bar{y})$. The latter can be obtained as an application of \cite[Theorem 1]{bereanu1} to the single equation
 \[-\left(\frac{y^\prime}{\sqrt{1-(y^\prime)^2}}\right)^\prime=c\,y^\mu, \quad y^\prime(0)=y(1)=0. \]
 \end{example}

\section*{Acknowledgements}

L. M. Fern\'andez-Pardo acknowledges the financial support of the Spanish Ministry of Science, Innovation and Universities (Grant reference FPU24/01545).
J. Rodr\'iguez--L\'opez has been partially supported by Ministerio de Ciencia y Tecnología (Spain), AEI and Feder, grant PID2020-113275GB-I00, and Xunta de Galicia, Spain, Project ED431C 2023/12.

\section*{Author Contributions}

All authors contributed equally to the preparation of this manuscript.

\section*{Competing interests}

The authors have no relevant financial or non-financial interests to disclose.

\section*{Data availability}

The manuscript has no associated data.

\end{document}